\DeclareFontFamily{U}{euf}{}
\DeclareFontShape{U}{euf}{m}{n}{%
  <5><6><7><8><9>gen*eufm%
  <10><10.95><12><14.4><17.28><20.74><24.88>eufm10%
  }{}
\DeclareFontShape{U}{euf}{b}{n}{%
  <5><6><7><8><9>gen*eufb%
  <10><10.95><12><14.4><17.28><20.74><24.88>eufb10%
  }{}
\DeclareFontFamily{U}{msb}{}
\DeclareFontShape{U}{msb}{m}{n}{%
  <5><6><7><8><9>gen*msbm%
  <10><10.95><12><14.4><17.28><20.74><24.88>msbm10%
  }{}
\DeclareFontFamily{U}{msa}{}
\DeclareFontShape{U}{msa}{m}{n}{%
  <5><6><7><8><9>gen*msam%
  <10><10.95><12><14.4><17.28><20.74><24.88>msam10%
  }{}
\newtheorem{theorem}{Theorem}[section]
\newtheorem{lemma}[theorem]{Lemma}
\newtheorem{proposition}[theorem]{Proposition}
\newtheorem{corollary}[theorem]{Corollary}
\theoremstyle{definition}
\newtheorem{definition}[theorem]{Definition}
\newtheorem{remark}[theorem]{Remark}
\numberwithin{equation}{section} \frenchspacing
\begin{document}

\title[Infinite order linear differential equation]
{Infinite order linear differential equation satisfied  by $p$-adic Hurwitz-type Euler zeta functions}

\dedicatory{Dedicated to the memory of Prof. David Goss (1952-2017)}


\author{Su Hu}
\address{Department of Mathematics, South China University of Technology, Guangzhou, Guangdong 510640, China}
\email{mahusu@scut.edu.cn}

\author{Min-Soo Kim}
\address{Department of Mathematics Education, Kyungnam University, Changwon, Gyeongnam 51767, Republic of Korea}
\email{mskim@kyungnam.ac.kr}



\subjclass[2010]{11M35,11B68}
\keywords{$p$-adic Hurwitz-type Euler zeta function, differential equation}

\begin{abstract}
In 1900, at the international congress of mathematicians, Hilbert  claimed that the Riemann zeta function $\zeta(s)$
is not the solution of any algebraic ordinary differential equations on its region of analyticity. In 2015, Van Gorder  \cite{VanGorder}
considered the question of whether $\zeta(s)$ satisfies a non-algebraic differential equation and  showed that it \emph{formally}
satisfies an infinite order linear differential equation. Recently, Prado and Klinger-Logan \cite{PK}
extended Van Gorder's result to show that the Hurwitz zeta function $\zeta(s,a)$ is also \emph{formally} satisfies a similar differential
equation
\begin{equation*}\label{HurDE}
    T\left[\zeta (s,a) - \frac{1}{a^s}\right] = \frac{1}{(s-1)a^{s-1}}.
\end{equation*}
But unfortunately in the same paper they proved that  the operator $T$ applied to Hurwitz zeta function $\zeta(s,a)$
does not converge at any point in the complex plane $\mathbb{C}$.

In this paper,  by defining $T_{p}^{a}$, a $p$-adic analogue of Van Gorder's operator $T,$  we establish an analogue of Prado and Klinger-Logan's
differential equation satisfied by $\zeta_{p,E}(s,a)$
which is the $p$-adic analogue of the Hurwitz-type Euler zeta functions
\begin{equation*}\label{HEZ}
\zeta_E(s,a)=\sum_{n=0}^\infty\frac{(-1)^n}{(n+a)^s}.
\end{equation*}
In contrast with the complex case, due to the non-archimedean property, the operator $T_{p}^{a}$ applied to the $p$-adic Hurwitz-type Euler zeta function $\zeta_{p,E}(s,a)$ is convergent
 $p$-adically in the area of $s\in\mathbb{Z}_{p}$ with $s\neq 1$ and $a\in K$ with $|a|_{p}>1,$ where $K$ is any finite extension of
 $\mathbb{Q}_{p}$ with ramification index over $\mathbb{Q}_{p}$  less than $p-1.$\end{abstract}

\maketitle

\section{Introduction}

Throughout this paper we shall use the following notations.
\begin{equation*}
\begin{aligned}
\qquad \mathbb{C}  ~~~&- ~\textrm{the field of complex numbers}.\\
\qquad p  ~~~&- ~\textrm{an odd rational prime number}. \\
\qquad\mathbb{Z}_p  ~~~&- ~\textrm{the ring of $p$-adic integers}. \\
\qquad\mathbb{Q}_p~~~&- ~\textrm{the field of fractions of}~\mathbb Z_p.\\
\qquad\mathbb C_p ~~~&- ~\textrm{the completion of a fixed algebraic closure}~\overline{\mathbb Q}_p~ \textrm{of}~\mathbb Q_{p}.
\end{aligned}
\end{equation*}

The Riemann zeta function $\zeta(s)$ is defined as
\begin{equation}\label{Riemann}
\zeta(s)=\sum_{n=1}^{\infty}\frac{1}{n^{s}},\quad\textrm{Re}(s) > 1,
\end{equation}
it can be analytically continued to the whole complex plane except for a single pole at $s=1$ with residue 1.
In 1900, at the international congress of mathematicians, David Hilbert \cite{HilbertProb} claimed that  $\zeta(s)$
is not the solution of any algebraic ordinary differential equations on its region of analyticity.
In 2015, Van Gorder \cite{VanGorder} considered the question of whether $\zeta(s)$ satisfies a non-algebraic differential equation and  showed that it \emph{formally} satisfies an infinite order linear differential equation.
In fact, he established the differential equation
\begin{equation}\label{zetaDE}
T[\zeta(s)-1]=\frac{1}{s-1}
\end{equation}
formally, where
\begin{equation}\label{T}
    T = \sum_{n=0}^\infty L_n
\end{equation}
and
\begin{align*}
    L_n &:= p_n(s) \exp(nD), \\
    p_n(s) &:= \begin{cases}
    1& \text{ if } n = 0 \\
    \frac{1}{(n+1)!}\prod_{j = 0}^{n-1} (s+j) &\text{ if } n >0,
    \end{cases}\\
    \exp(nD) &:= id + \sum_{k=1}^\infty \frac{n^k}{k!} D_s^k
\end{align*}
for $D_s^k := \frac{\partial ^k}{\partial s^k}.$

For $0 < a \leq 1$, Re$(s) >1$, in 1882 Hurwitz~\cite{Hurwitz}
defined the partial zeta functions
\begin{equation}~\label{Hurwitz}
\zeta(s,a)=\sum_{n=0}^{\infty}\frac{1}{(n+a)^{s}},
\end{equation}
which generalized (\ref{Riemann}).
As (\ref{Riemann}), this function can also be analytically continued to a meromorphic function in the
complex plane with a simple pole at $s=1$.
Recently, Prado and Klinger-Logan \cite{PK}
extended Van Gorder's result to show that the Hurwitz zeta function $\zeta(s,a)$ also \emph{formally} satisfies a similar differential
equation
\begin{equation}\label{HurDE1}
    T\left[\zeta (s,a) - \frac{1}{a^s}\right] = \frac{1}{(s-1)a^{s-1}}
\end{equation}
for $s \in \mathbb{C}$ satisfying $s + n \neq 1$ for all $n \in \mathbb{Z}_{\geq 0},$ where $T$ is the Van Gorder's operator
defined as in (\ref{T}) (see \cite[Corollary 4]{PK}). But unfortunately, in the same paper they proved that
$$T\left[ \zeta (s, a) - \frac{1}{a^s} \right]=\sum_{n=0}^{\infty} p_n(s)\exp(nD)\left[\zeta(s,a)- \frac{1}{a^s}\right],$$
the operator $T$ applied to Hurwitz zeta function, does not converge at any point in the complex plane $\mathbb{C}$ (see \cite[Theorem 8]{PK}).
Then they defined a generalized operator $G$ instead of $T$. That is, let $\mathcal{M}$ be the collection of meromorphic functions on $\mathbb{C}$ and $f\in\mathcal{M}$, define $G:\mathcal{M}\to \mathcal{M} $ by
\begin{align}\label{G}
G[f](s) = \sum_{n=0}^\infty p_n(s)f(s+n).
\end{align}
Under this linear operator, we have a convergent difference equation
\begin{equation}\label{HurDE2}
    G\left[\zeta (s,a) - \frac{1}{a^s}\right] = \frac{1}{(s-1)a^{s-1}}.
\end{equation}
But it needs to mention that $G$ is not a differential operator.

For Re$(s)>0$, the Euler zeta function (also called alternative series or Dirichlet  eta function)
is defined by
\begin{equation}~\label{Euler}
\zeta_{E}(s)=\sum_{n=1}^{\infty}\frac{(-1)^{n-1}}{n^{s}}.
\end{equation}
This function can be analytically continued  to the complex plane without any pole. For Re$(s)>0$, (\ref{Riemann}) and (\ref{Euler}) are connected by the following equation
\begin{equation}~\label{Riemann-Euler}
\zeta_{E}(s)=(1-2^{1-s})\zeta(s).
\end{equation}
By Weil's history~\cite[p.~273--276]{Weil} (also see a survey by Goss~\cite[Section 2]{Goss}),
Euler used (\ref{Euler}) to ``prove"
\begin{equation}~\label{fe}
\frac{\zeta_{E}(1-s)}{\zeta_{E}(s)}=\frac{-\Gamma(s)(2^{s}-1)\textrm{cos}(\pi s/2)}{(2^{s-1}-1)\pi^{s}},
\end{equation}
which leads to the functional equation of $\zeta(s)$.

For $s\in\mathbb{C}$ and $a\neq0,-1,-2,\ldots,$ the Hurwitz-type Euler zeta function is defined as the Hurwitz zeta function (\ref{Hurwitz}) twisted by $(-1)^{n}$
\begin{equation}\label{HEZ1}
\zeta_E(s,a)=\sum_{n=0}^\infty\frac{(-1)^n}{(n+a)^s}.
\end{equation}
This  function can  also be analytically
continued to the complex plane without any pole.
It represents a partial zeta function of cyclotomic fields in one version of Stark's conjectures in algebraic number theory (see \cite[p. 4249, (6.13)]{HK-G}).
Recently, several interesting properties for the function $\zeta_{E}(s,a)$ have been studied, including its Fourier expansion and several integral representations \cite{HKK}, special values and power series expansions \cite{HK2019},
convexity properties \cite{Cvijovic}, etc.

In \cite{KS}, using the fermionic $p$-adic integral (see (\ref{-q-e2}) below),
we defined $\zeta_{p,E}(s,a),$ the $p$-adic analogue of Hurwitz-type Euler zeta functions (\ref{HEZ1}), which interpolates (\ref{HEZ1}) at nonpositive integers (see Theorem \ref{p-E-zeta-val} below),
so called the $p$-adic Hurwitz-type Euler zeta functions.
In the same paper, we also proved several properties of $\zeta_{p,E}(s,a),$
including the analyticity, the convergent Laurent series
expansion, the distribution formula, the difference equation,
the reflection functional equation, the derivative formula and the $p$-adic Raabe formula.

In this note, we define a $p$-adic analogue of the operator $T,$ denoted by $T_{p}^{a}$ (see (\ref{Tp}) below). Under this operator, the $p$-adic Hurwitz-type Euler zeta function $\zeta_{p,E}(s,a)$ satisfies
an infinite order linear differential equation
\begin{equation}\label{main}
T_{p}^{a}\left[\zeta_{p,E}(s,a)-\langle a\rangle^{1-s}\right]
=\frac{1}{s-1}\left(\langle a-1 \rangle^{1-s}-\langle a\rangle^{1-s}\right)
\end{equation}
(see Theorem \ref{main theorem}).
In contrast with the complex case, the left hand side of the above equation is convergent everywhere for $s\in\mathbb{Z}_{p}$ with $s\neq 1$ and $a\in K$ with $|a|_{p}>1,$ where $K$ is any finite extension of
 $\mathbb{Q}_{p}$ with ramification index  over $\mathbb{Q}_{p}$  less than $p-1$ (see Corollary~\ref{Cor} and Remarks~\ref{remark3} and \ref{remark4} below).

\section{Preliminaries}

\subsection{$p$-adic Teichm\"uller character}\label{2.1}

To our purpose, in this subsection, we recall some notions from $p$-adic analysis,
including the $p$-adic Teichm\"uller character $\omega_v(a)$ and
the projection function $\langle a\rangle$ for $a\in\mathbb{C}_{p}^{\times}$.
Our approach follows Tangedal and Young in \cite{TP} closely.

Given $a\in\mathbb{Z}_{p}, p\nmid a$ and $p>2,$ there exists a unique $(p-1)$th
root of unity $\omega(a)\in\mathbb Z_p$ such that
$$a\equiv\omega(a)\pmod{p},$$
where $\omega$ is the Teichm\"uller character. Let $\langle
a\rangle=\omega^{-1}(a)a,$ so $\langle a\rangle \equiv1\pmod p.$

In what follows we extend the definition domain of the projection  function $\langle a\rangle$ from $\mathbb{Z}_{p}$ to $\mathbb{C}_{p}$. Fixed an embedding of $\overline{\mathbb{Q}}$ into $\mathbb{C}_{p},$
denote the image of the set of positive real rational powers of $p$ under this
embedding  in $\mathbb{C}_{p}^{\times}$  by $p^{\mathbb{Q}},$
and  the group of roots of unity with order not divisible by $p$  in
$\mathbb{C}_{p}^{\times}$ by $\mu$. Given
$a\in\mathbb{C}_{p}$ with $|a|_{p}=1$, there exists a unique element
$\hat{a}\in\mu$ such that
 \begin{equation}\label{hat+}|a-\hat{a}|_{p} < 1, \end{equation}
which is also named the
Teichm\"uller representative of $a$; it can also be defined from
$\hat{a}=\lim_{n\to\infty}a^{p^{n!}}$. Then we extend this definition to
$a\in\mathbb{C}_{p}^{\times}$ by
\begin{equation}\label{hat}
\hat{a}:=(\widehat{a/p^{v_{p}(a)}}),
\end{equation}
that is, we define $\hat{a}=\hat{u}$ if $a=p^{r}u$ with $p^{r}\in
p^{\mathbb{Q}}$ and $|u|_{p}=1$, then we define the function
$\langle\cdot\rangle$ on $\mathbb{C}_{p}^{\times}$ by
$$\langle a\rangle=p^{-v_{p}(a)}a/\hat{a}.$$
Now we define $\omega_v(\cdot)$ on $\mathbb{C}_{p}^{\times}$ by
\begin{equation}\label{omega}
\omega_v(a)=\frac{a}{\langle a\rangle}=p^{v_{p}(a)}\hat{a}.
\end{equation}
From this we get an
internal product decomposition of multiplicative groups
\begin{equation}
\mathbb{C}_{p}^{\times}\simeq p^{\mathbb{Q}}\times\mu\times D,
\end{equation}
where $D=\{a\in\mathbb{C}_{p}: |a-1|_{p} < 1\},$ given by
\begin{equation}\label{deco}
a=p^{v_{p}(a)}\cdot\hat{a}\cdot\langle a\rangle\mapsto
(p^{v_{p}(a)},\hat{a},\langle a\rangle).
\end{equation}
As remarked by Tangedal and Young in \cite{TP}, this decomposition of
$\mathbb{C}_{p}^{\times}$ depends on the choice of embedding of
$\overline{\mathbb{Q}}$ into $\mathbb{C}_{p}$; the projections
$p^{v_{p}(a)},\hat{a},\langle a\rangle$ are uniquely determined up
to roots of unity. However for $a\in\mathbb{Q}_{p}^{\times}$ the
projections $p^{v_{p}(a)},\hat{a},\langle a\rangle$ are uniquely
determined and do not depend on the choice of the embedding. Notice
that the projections $a\mapsto p^{v_{p}(a)}$ and $a\mapsto \hat{a}$
are constant on discs of the form $\{a\in\mathbb{C}_{p}:|a-y|_{p} <
|y|_{p}\}$ and therefore have derivative zero whereas the
projections $a\mapsto\langle a\rangle$ has derivative
$\frac{d}{da}\langle a\rangle=\langle a\rangle/a$.

\subsection{The fermionic $p$-adic integral and the $p$-adic Hurwitz-type Euler zeta functions}

In this subsection, we recall the definition of the $p$-adic Hurwitz-type Euler zeta functions $\zeta_{p,E}(s,a)$ from the fermionic $p$-adic integral. For details, we refer to \cite{KS}.

 Let $UD(\mathbb Z_p)$ be the space of all uniformly (or strictly) differentiable $\mathbb C_p$-valued functions on $\mathbb Z_p$ (see \cite[\S11.1.2]{Cohen}).
The fermionic $p$-adic integral $I_{-1}(f)$  on $\mathbb Z_p$ of a function $f\in UD(\mathbb Z_p)$ is
defined by
\begin{equation}\label{-q-e2}
I_{-1}(f)=\int_{\mathbb Z_p}f(t)d\mu_{-1}(t)
=\lim_{r\rightarrow\infty}\sum_{k=0}^{p^r-1}f(k)(-1)^k.
\end{equation}
The fermionic $p$-adic integral (\ref{-q-e2}) was independently found by Katz \cite[p.~486]{Katz} (in Katz's notation, the $\mu^{(2)}$-measure), Shiratani and
Yamamoto \cite{Shi}, Osipov \cite{Osipov}, Lang~\cite{Lang} (in Lang's notation, the $E_{1,2}$-measure), T. Kim~\cite{TK} from very different viewpoints.

For $a\in\mathbb{C}_{p}^{\times}$ and $s\in\mathbb{C}_{p},$ the two-variable function
$\langle a\rangle^{s}$ (\cite[p. 141]{SC}) is defined by
\begin{equation}\label{integral}
\langle a\rangle^{s}=\sum_{n=0}^{\infty}\binom{s}{n}(\langle a\rangle-1)^{n},
\end{equation}
when this sum is convergence. The analytic property of $\langle a\rangle^{s}$
is stated in the following proposition.

\begin{proposition}[see Tangedal and Young \cite{TP}]\label{analytic}
For any
 $a\in\mathbb{C}_{p}^{\times}$ the function $s\mapsto\langle
 a\rangle^{s}$ is a $C^{\infty}$ function of $s$ on
 $\mathbb{Z}_{p}$ and is analytic on a disc of positive radius about
 $s=0$; on this disc it is locally analytic as a function of $a$ and
 independent of the choice made to define the $\langle
 \cdot\rangle$ function. If $a$ lies in a finite extension $K$ of
 $\mathbb{Q}_{p}$ whose ramification index over $\mathbb{Q}_{p}$ is
 less than $p-1$ then  $s\mapsto\langle
 a\rangle^{s}$  is analytic for $|s|_{p} <
 |\pi|_{p}^{-1}p^{-1/(p-1)}$, where $(\pi)$ is the maximal ideal of
 the ring of integers $O_{K}$ of $K$. If $s\in\mathbb{Z}_{p},$
 the function $a\mapsto\langle a\rangle^{s}$ is an analytic function of $a$
 on any disc of the form $\{a\in\mathbb{C}_{p}:|a-y|_{p} <|y|_{p}\}$.
\end{proposition}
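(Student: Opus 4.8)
The plan is to reduce everything to the two basic $p$-adic functions $\exp$ and $\log$, exploiting that by the decomposition~(\ref{deco}) one always has $\langle a\rangle\in D$, i.e. $|\langle a\rangle-1|_p<1$, so that $\ell:=\log\langle a\rangle$ is defined and $\langle a\rangle^{s}$ agrees with $\exp(s\ell)=\sum_{k\ge0}\ell^{k}s^{k}/k!$ wherever the latter converges. The identity $\sum_{n}\binom{s}{n}(\langle a\rangle-1)^{n}=\exp(s\log\langle a\rangle)$ is the usual $(1+x)^{s}$ identity; it holds on the common domain of convergence since both sides agree for $s\in\mathbb{Z}_{\ge0}$ and are continuous in $s$. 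Throughout one may discard the trivial case $\langle a\rangle=1$, where $\langle a\rangle^{s}\equiv1$.

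First I would treat, for fixed $a$, the dependence on $s$. The exponential series converges exactly for $|s\ell|_p<p^{-1/(p-1)}$, i.e. on the disc $|s|_p<\rho:=p^{-1/(p-1)}/|\ell|_p$, which has positive radius because $|\ell|_p$ is a finite positive number; this gives analyticity of $s\mapsto\langle a\rangle^{s}$ on a disc about $s=0$. To upgrade to $C^{\infty}$ on all of $\mathbb{Z}_p$ I would use translation: the Vandermonde identity $\binom{s_0+t}{n}=\sum_{i+j=n}\binom{s_0}{i}\binom{t}{j}$ yields $\langle a\rangle^{s_0+t}=\langle a\rangle^{s_0}\langle a\rangle^{t}$ for $s_0,t\in\mathbb{Z}_p$ (an ultrametric Cauchy product), and for $|t|_p<\rho$ one has $\langle a\rangle^{t}=\exp(t\ell)$. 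Hence about every $s_0\in\mathbb{Z}_p$ the function has a convergent power series expansion of the same radius $\rho$, so it is locally analytic, in particular $C^{\infty}$. Independence of the choices defining $\langle\cdot\rangle$ follows because two choices change $\langle a\rangle$ only by a root of unity $\zeta\in D$, and $\log\zeta=0$, so $\ell$, and therefore $\exp(s\ell)$, is unaffected. Local analyticity in $a$ near a point of the disc about $s=0$ comes from the fact recalled above that $a\mapsto\langle a\rangle$ is locally analytic; composing with $\log$ and with $\exp(s\,\cdot\,)$, both analytic, preserves this.

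For the explicit radius in the ramified case I would track $|\ell|_p$, and this is exactly where the hypothesis $e<p-1$ enters. Since $a\in K$ and $\langle a\rangle\in D$, one has $v_p(\langle a\rangle-1)\ge 1/e=v_p(\pi)$, and $1/e>1/(p-1)$ forces $|\langle a\rangle-1|_p<p^{-1/(p-1)}$, the range on which $\log$ is isometric; thus $|\ell|_p=|\langle a\rangle-1|_p\le|\pi|_p$. Substituting into $\rho=p^{-1/(p-1)}/|\ell|_p$ gives $\rho\ge|\pi|_p^{-1}p^{-1/(p-1)}$, the asserted radius. Finally, for fixed $s\in\mathbb{Z}_p$ and the dependence on $a$: on a disc $\{|a-y|_p<|y|_p\}$ both $v_p(a)$ and $\hat a$ are constant, so writing $a=y(1+u)$ with $|u|_p<1$ gives $\langle a\rangle=\langle y\rangle(1+u)$, whence $\langle a\rangle^{s}=\langle y\rangle^{s}(1+u)^{s}=\langle y\rangle^{s}\sum_{n\ge0}\binom{s}{n}u^{n}$ by base–multiplicativity (valid for $s\in\mathbb{Z}_p$ by continuity and density of $\mathbb{Z}_{\ge0}$). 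Since $|\binom{s}{n}|_p\le1$ and $|u|_p<1$, this is a convergent power series in $u=a/y-1$, so $a\mapsto\langle a\rangle^{s}$ is analytic on the disc.

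The main obstacle is the $C^{\infty}$-on-all-of-$\mathbb{Z}_p$ assertion, where the exponential series no longer converges globally; the translation/Vandermonde argument is what rescues it, and alongside it the multiplicativity identities in the base and the exponent, together with the vanishing of $\log$ on roots of unity, must be justified. The estimate $v_p(\langle a\rangle-1)\ge 1/e$ in the ramified case is the other delicate point, being precisely the place where the condition $e<p-1$ is needed.
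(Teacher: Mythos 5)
The paper never proves this proposition---it is imported wholesale from Tangedal and Young \cite{TP}---so your attempt can only be measured against the statement itself. Your plan (linearize via $\exp_p$ and $\log_p$, get $C^\infty$ on $\mathbb{Z}_p$ from the translation identity $\langle a\rangle^{s_0+t}=\langle a\rangle^{s_0}\langle a\rangle^{t}$, track $|\log_p\langle a\rangle|_p$ for the ramified radius, and expand in $u=a/y-1$ for the last assertion) is the standard route, and your treatment of the final assertion (analyticity in $a$ for fixed $s\in\mathbb{Z}_p$) is correct. But two steps have genuine gaps. The first is your foundational identity: you claim the binomial series (\ref{integral}) equals $\exp_p(s\log_p\langle a\rangle)$ ``on the common domain of convergence, since both sides agree for $s\in\mathbb{Z}_{\ge0}$.'' Agreement at $s=m\in\mathbb{Z}_{\geq 0}$ forces $\exp_p(\log_p\langle a\rangle)=\langle a\rangle$, which holds only when $|\langle a\rangle-1|_p<p^{-1/(p-1)}$, not on all of $D$. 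Concretely, take $a=\zeta_p$ a primitive $p$-th root of unity: then $\langle a\rangle=\zeta_p$, $\log_p\zeta_p=0$, so your right-hand side is identically $1$, while the binomial series at $s=1$ equals $\zeta_p\neq1$; note also $\ell=0$ here, so your radius $\rho=p^{-1/(p-1)}/|\ell|_p$ is meaningless, and this case is not excluded by discarding $\langle a\rangle=1$. Thus for every $a$ with $p^{-1/(p-1)}\le|\langle a\rangle-1|_p<1$, your proofs of analyticity near $s=0$, of the $C^\infty$ claim, and of independence of choices rest on a false identity. The repair is to first pass to a $p$-power: choose $k$ with $|\langle a\rangle^{p^k}-1|_p<p^{-1/(p-1)}$ (possible since $\langle a\rangle^{p^k}\to1$), prove the identity on $p^k\mathbb{Z}_p$, and deduce local analyticity coset by coset; equivalently, establish the identity only on a sufficiently small disc about $s=0$, using that nontrivial $p$-power roots of unity are at distance at least $p^{-1/(p-1)}$ from $1$.

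The second gap is the ramified-case bound $v_p(\langle a\rangle-1)\ge 1/e$, which you deduce ``since $a\in K$ and $\langle a\rangle\in D$.'' That inference silently assumes $\langle a\rangle\in K$, so that its valuation lies in $\tfrac1e\mathbb{Z}$; this is not automatic, because $\langle a\rangle=p^{-v_p(a)}a/\hat a$ is built from the chosen root of $p$ in $p^{\mathbb{Q}}$ and the Teichm\"uller element $\hat a$, neither of which need lie in $K$. Proving $\langle a\rangle\in K$ is a real step: write $a=\pi^m w$ with $w\in O_K^\times$, so that $\langle a\rangle=\langle\pi\rangle^m\langle w\rangle$ with $\langle w\rangle=w/\hat w\in 1+(\pi)$; then $\langle\pi\rangle$ is an $e$-th root of $\langle\pi^e/p\rangle\in1+(\pi)$ lying in $D$, and since $p\nmid e$ the only such root is the binomial one $\sum_{n}\binom{1/e}{n}\bigl(\langle\pi^e/p\rangle-1\bigr)^n\in K$, which satisfies the desired bound. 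Observe that $p\nmid e$ is a consequence of exactly the hypothesis $e<p-1$, so that hypothesis is needed earlier than the point where you invoke it (and for $p\mid e$ the claim $\langle a\rangle\in K$ can genuinely fail, e.g.\ $a=p^{1/p}$). Without this step the asserted radius of analyticity $|\pi|_p^{-1}p^{-1/(p-1)}$ is unsupported.
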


Now we are at the position to recall the definition for the $p$-adic Hurwitz-type Euler zeta function  $\zeta_{p,E}(s,a)$.

\begin{definition}[{see \cite[Definition 3.3]{KS}}]\label{p-E-zeta}
For $a\in \mathbb{C}_{p}\backslash \mathbb{Z}_{p}$, we define the
$p$-adic Hurwitz-type Euler zeta function $\zeta_{p,E}(s,a)$ by the formula
\begin{equation}\label{E-zeta}
\zeta_{p,E}(s,a)=\int_{\mathbb{Z}_{p}}\langle a+t\rangle^{1-s}d\mu_{-1}(t).
\end{equation}
\end{definition}

The following theorem summarize the analytic property of $\zeta_{p,E}(s,x)$ and
Tangedal and Young proved a similar result for
$p$-adic multiple zeta functions (see \cite[Theorem 3.1]{TP}).

\begin{theorem}[{see \cite[Theorem 3.4]{KS}}] \label{analytic-zeta}
For any  choice of $a\in\mathbb{C}_{p}\backslash \mathbb{Z}_{p}$ the
function $\zeta_{p,E}(s,a)$ is a $C^{\infty}$ function of $s$ on
$\mathbb{Z}_{p}$, and is an analytic function of $s$ on a disc of
positive radius about $s=0$; on this disc it is locally analytic as
a function of $a$ and independent of the choice made to define the
$\langle\cdot\rangle$ function. If $a$ is so chosen to lie in a
finite extension $K$ of $\mathbb{Q}_{p}$ whose ramification index
over $\mathbb{Q}_{p}$ is less than $p-1$ then $\zeta_{p,E}(s,a)$ is
analytic for $|s|_{p} <
 |\pi|_{p}^{-1}p^{-1/(p-1)}$. If $s\in\mathbb{Z}_{p}$, the function
 $\zeta_{p,E}(s,a)$ is locally analytic as a function of $a$ on
 $\mathbb{C}_{p}\backslash\mathbb{Z}_{p}$.
\end{theorem}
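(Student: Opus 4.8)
The plan is to deduce all three assertions from the analytic behaviour of the integrand $\langle a+t\rangle^{1-s}$ recorded in Proposition~\ref{analytic}, exploiting the fact that the fermionic integral is a \emph{bounded} functional. The engine is the non-archimedean estimate
\begin{equation*}
\left|I_{-1}(f)\right|_{p}=\left|\lim_{r\to\infty}\sum_{k=0}^{p^{r}-1}(-1)^{k}f(k)\right|_{p}\le\sup_{t\in\mathbb{Z}_{p}}|f(t)|_{p},
\end{equation*}
which follows from the ultrametric inequality applied to the defining partial sums, and which shows that $I_{-1}$ is a continuous $\mathbb{C}_{p}$-linear functional of operator norm at most $1$. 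Consequently, if $s\mapsto\big(t\mapsto\langle a+t\rangle^{1-s}\big)$ is, for fixed $a$, a $C^{\infty}$ (respectively analytic) map from $\mathbb{Z}_{p}$ (respectively from a disc about $s=0$) into the space $C(\mathbb{Z}_{p})$ equipped with the sup-norm in $t$, then its composition with $I_{-1}$, namely $\zeta_{p,E}(s,a)=I_{-1}\big(\langle a+\cdot\rangle^{1-s}\big)$, inherits the same regularity and radius of convergence. Thus everything reduces to upgrading Proposition~\ref{analytic}, which controls $s\mapsto\langle z\rangle^{1-s}$ for each individual $z\in\mathbb{C}_{p}^{\times}$, to an estimate uniform in $t$ over the range $z=a+t$, $t\in\mathbb{Z}_{p}$.

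For the uniformity I would first treat the model case $|a|_{p}>1$. There every $a+t$ with $t\in\mathbb{Z}_{p}$ lies in $\{z:|z-a|_{p}\le 1<|a|_{p}\}$, on which $z\mapsto p^{v_{p}(z)}$ and $z\mapsto\hat{z}$ are constant; hence $\langle a+t\rangle=\langle a\rangle(1+t/a)$, and by multiplicativity of $\langle\cdot\rangle$,
\begin{equation*}
\langle a+t\rangle^{1-s}=\langle a\rangle^{1-s}\sum_{n=0}^{\infty}\binom{1-s}{n}\left(\frac{t}{a}\right)^{n}.
\end{equation*}
Since $|t/a|_{p}\le|a|_{p}^{-1}<1$ uniformly in $t$, the partial sums (polynomials in $t$) converge uniformly, and applying the bounded functional $I_{-1}$ term by term gives
\begin{equation*}
\zeta_{p,E}(s,a)=\langle a\rangle^{1-s}\sum_{n=0}^{\infty}\binom{1-s}{n}a^{-n}\int_{\mathbb{Z}_{p}}t^{n}\,d\mu_{-1}(t).
\end{equation*}
The $p$-adic Euler numbers $\int_{\mathbb{Z}_{p}}t^{n}\,d\mu_{-1}(t)$ satisfy $\big|\int_{\mathbb{Z}_{p}}t^{n}\,d\mu_{-1}(t)\big|_{p}\le 1$, and $|a^{-n}|_{p}=|a|_{p}^{-n}\to 0$, so the series converges; the $C^{\infty}$-ness and analyticity in $s$ (with radius $|s|_{p}<|\pi|_{p}^{-1}p^{-1/(p-1)}$ when $a\in K$ has ramification index $<p-1$) then follow from the corresponding properties of the polynomials $\binom{1-s}{n}$ and of $\langle a\rangle^{1-s}$ supplied by Proposition~\ref{analytic}, once one checks the $s$-series converges uniformly on that disc. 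For fixed $s\in\mathbb{Z}_{p}$ the same identity exhibits $\zeta_{p,E}(s,a)$ as $\langle a\rangle^{1-s}$ times a convergent power series in $a^{-1}$, analytic in $a$ on discs $\{a:|a-y|_{p}<|y|_{p}\}$ by the last clause of Proposition~\ref{analytic}, yielding local analyticity in $a$; independence of the choice defining $\langle\cdot\rangle$ is inherited termwise from the integrand.

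To pass to an arbitrary $a\in\mathbb{C}_{p}\setminus\mathbb{Z}_{p}$, I would use that $\delta:=\inf_{t\in\mathbb{Z}_{p}}|a+t|_{p}>0$ (the infimum is attained since $\mathbb{Z}_{p}$ is compact and $a\notin\mathbb{Z}_{p}$) to choose $N$ with $p^{-N}<\delta$, and partition $\mathbb{Z}_{p}=\bigsqcup_{c=0}^{p^{N}-1}(c+p^{N}\mathbb{Z}_{p})$. On each residue disc $v_{p}$ and $\hat{\cdot}$ are constant, so writing $t=c+p^{N}u$ one factors $\langle a+t\rangle=\langle a+c\rangle\big(1+p^{N}u/(a+c)\big)$ with $|p^{N}u/(a+c)|_{p}\le p^{-N}/\delta<1$, and the expansion of the model case applies on each piece; summing the finitely many contributions (related to integrals over $\mathbb{Z}_{p}$ through the distribution relation for $\mu_{-1}$) recovers $\zeta_{p,E}(s,a)$. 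I expect this globalization to be the main obstacle: the delicate point throughout is the uniformity in $t$ that legitimizes interchanging $I_{-1}$ with the limits defining differentiation and analytic continuation, and for $|a|_{p}\le 1$ the single factorization breaks and must be replaced by the partition above, with the uniform radius in Proposition~\ref{analytic} ensuring the estimates survive the passage to the disc $|s|_{p}<|\pi|_{p}^{-1}p^{-1/(p-1)}$. Once the $C(\mathbb{Z}_{p})$-valued analyticity is established uniformly, the conclusions follow immediately from the boundedness of $I_{-1}$.
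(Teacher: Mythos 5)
This theorem is not proved in the paper at all: it is imported verbatim from \cite[Theorem 3.4]{KS}, with the remark that the analogous result for $p$-adic multiple zeta functions is \cite[Theorem 3.1]{TP}. So there is no internal proof to compare you against, and your argument can only be measured against those sources --- and in fact it follows essentially their route. Your three ingredients are the standard ones: (i) $I_{-1}$ is a linear functional of norm at most $1$ (the ultrametric bound on the partial sums, together with the boundedness of the measure $\mu_{-1}$, which is what guarantees the limit exists for merely continuous integrands); (ii) on any disc where $v_p$ and $\hat{\cdot}$ are constant one factors $\langle a+t\rangle=\langle a\rangle(1+t/a)$ and expands binomially, and termwise integration yields exactly the convergent Laurent-type expansion $\zeta_{p,E}(s,a)=\langle a\rangle^{1-s}\sum_{n}\binom{1-s}{n}a^{-n}\int_{\mathbb{Z}_p}t^n\,d\mu_{-1}(t)$ established in \cite{KS}; (iii) a general $a\in\mathbb{C}_p\setminus\mathbb{Z}_p$ is reached by splitting $\mathbb{Z}_p$ into classes mod $p^N$ with $p^{-N}<\inf_{t}|a+t|_p$ and using $\int_{\mathbb{Z}_p}f\,d\mu_{-1}=\sum_{c=0}^{p^N-1}(-1)^c\int_{\mathbb{Z}_p}f(c+p^Nu)\,d\mu_{-1}(u)$, valid because $p^N$ is odd. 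Your estimates do deliver the stated radius for $a\in K$ with ramification index $e<p-1$: the series factors converge on $|s|_p<\delta p^N p^{-1/(p-1)}$, which can be made at least $p^{1/e-1/(p-1)}$ by enlarging $N$, so the binding constraint is the factor $\langle a+c\rangle^{1-s}$, covered by Proposition~\ref{analytic}. This skeleton is sound.

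Two clauses, however, are dispatched too quickly. First, $C^\infty$ on $\mathbb{Z}_p$ does not follow from uniform convergence of a series of polynomials alone (uniform limits need not even be $C^1$ in the $p$-adic sense); what saves you is the geometric decay $\bigl|a^{-n}\int t^n d\mu_{-1}\bigr|_p\le\theta^n$ with $\theta<1$, fed into Mahler's criterion in the variable $x=1-s$ (a series $\sum_n c_n\binom{x}{n}$ is $C^k$ when $n^k|c_n|_p\to0$), and, for analyticity, the fact that a sup-norm limit of polynomials on a closed disc is given by a convergent power series; these are the actual content and should be cited or proved. Second, and more seriously, the independence-of-choice clause is not ``inherited termwise from the integrand'': Proposition~\ref{analytic} asserts choice-independence of $\langle z\rangle^{s}$ only on a disc about $s=0$, whereas your integrand carries the exponent $1-s$, and $\langle z\rangle$ itself (hence $\omega_v(z)$) is well defined only up to $p$-power roots of unity when $v_p(z)$ has denominator divisible by $p$. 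Indeed, by Theorem~\ref{p-E-zeta-val} one has $\zeta_{p,E}(0,a)=\omega_v^{-1}(a)E_1(a)$, which visibly involves the choice-sensitive factor $\omega_v(a)$, so this clause requires the $\exp_p$/$\log_p$ representation of \cite{TP} and a careful discussion of which $a$ are affected. Your one-line dismissal of this point is the genuine gap in the write-up.
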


It needs to mention that the $p$-adic Hurwitz-type Euler zeta function $\zeta_{p,E}(s,a)$ interpolates its complex counterpart $\zeta_{E}(s,a)$ (\ref{HEZ1}) $p$-adically, that is,

\begin{theorem}[{see \cite[Theorem 3.8]{KS}}] \label{p-E-zeta-val}
Suppose that $a\in \mathbb{C}_{p}$ and $|a|_p>1.$
For $m\in\mathbb N,$
$$\zeta_{p,E}(1-m,a)=\frac1{\omega_v^{m}(a)}E_m(a)=\frac1{\omega_v^{m}(a)}\zeta_E(-m,a),$$
where the Euler  polynomials $E_{m}(x)$ is defined by the generating function
\begin{equation}\label{Eu-pol}
\frac{2e^{xz}}{e^z+1}=\sum_{m=0}^\infty E_m(x)\frac{z^m}{n!}, \quad |z|<\pi.
\end{equation}
\end{theorem}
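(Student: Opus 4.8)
The plan is to feed $s=1-m$ into the integral representation (\ref{E-zeta}) and reduce everything to a classical moment of the fermionic measure. At $s=1-m$ the exponent in (\ref{E-zeta}) becomes $1-s=m\in\mathbb{N}$, and for a nonnegative integer exponent the binomial series (\ref{integral}) defining the two-variable power terminates, so $\langle a+t\rangle^{1-s}$ reduces to the ordinary $m$-th power $\langle a+t\rangle^{m}$. Hence
\begin{equation*}
\zeta_{p,E}(1-m,a)=\int_{\mathbb{Z}_{p}}\langle a+t\rangle^{m}\,d\mu_{-1}(t).
\end{equation*}

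Next I would strip off the projection bracket. Because $|a|_{p}>1$ and $|t|_{p}\le 1$, we have $|a+t|_{p}=|a|_{p}$, so $a+t$ lies in the disc $\{x\in\mathbb{C}_{p}:|x-a|_{p}<|a|_{p}\}$, on which --- as recalled in Subsection \ref{2.1} --- both $x\mapsto p^{v_{p}(x)}$ and $x\mapsto\hat{x}$ are constant. Therefore $\omega_{v}(a+t)=\omega_{v}(a)$ for every $t\in\mathbb{Z}_{p}$, and $\langle a+t\rangle=(a+t)/\omega_{v}(a+t)=(a+t)/\omega_{v}(a)$. Pulling the constant $\omega_{v}(a)^{-m}$ outside the (linear) integral gives
\begin{equation*}
\zeta_{p,E}(1-m,a)=\frac{1}{\omega_{v}^{m}(a)}\int_{\mathbb{Z}_{p}}(a+t)^{m}\,d\mu_{-1}(t).
\end{equation*}

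It then remains to evaluate the moment $\int_{\mathbb{Z}_{p}}(a+t)^{m}\,d\mu_{-1}(t)$, which I would identify with $E_{m}(a)$. The cleanest route starts from the finite Riemann sums in (\ref{-q-e2}) and uses the Euler addition relation $E_{m}(x)+E_{m}(x+1)=2x^{m}$: a telescoping argument (together with the fact that $p^{r}$ is odd, since $p>2$) yields
\begin{equation*}
\sum_{k=0}^{p^{r}-1}(-1)^{k}(a+k)^{m}=\frac{1}{2}\left[E_{m}(a)+E_{m}(a+p^{r})\right].
\end{equation*}
Letting $r\to\infty$, so that $p^{r}\to 0$ in $\mathbb{Z}_{p}$, and using the continuity of the polynomial $E_{m}$ gives $\int_{\mathbb{Z}_{p}}(a+t)^{m}\,d\mu_{-1}(t)=E_{m}(a)$, which is exactly the first asserted equality. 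Alternatively one may compute the generating function $\int_{\mathbb{Z}_{p}}e^{(a+t)z}\,d\mu_{-1}(t)=2e^{az}/(e^{z}+1)$ for $|z|_{p}$ small and match Taylor coefficients against (\ref{Eu-pol}).

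Finally, the second equality $E_{m}(a)=\zeta_{E}(-m,a)$ is the classical special-value formula for the Hurwitz-type Euler zeta function at nonpositive integers; I would recover it (in the normalization of the cited references) from the even/odd splitting $\zeta_{E}(s,a)=2^{-s}\bigl[\zeta(s,a/2)-\zeta(s,(a+1)/2)\bigr]$, the Hurwitz values $\zeta(-m,x)=-B_{m+1}(x)/(m+1)$, and the Bernoulli--Euler multiplication identity, or else simply cite it. I expect the only genuinely delicate point to be the reduction in the second paragraph: one must verify that the hypothesis $|a|_{p}>1$ --- and not merely $a\notin\mathbb{Z}_{p}$ --- is what forces $\omega_{v}$ to be constant on the entire coset $a+\mathbb{Z}_{p}$, so that the bracket $\langle a+t\rangle$ becomes a genuine $p$-adic polynomial in $t$ and the integral may be evaluated termwise.
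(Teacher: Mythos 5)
First, a point of comparison: this paper does not actually prove Theorem \ref{p-E-zeta-val}; it is quoted from \cite[Theorem 3.8]{KS} without proof, so your proposal can only be measured against the standard argument of that reference, which it essentially reproduces. Your proof of the first equality is correct and complete. At $s=1-m$ the binomial series (\ref{integral}) terminates, so the integrand in (\ref{E-zeta}) is the ordinary power $\langle a+t\rangle^{m}$; since $|a|_{p}>1$ and $|t|_{p}\leq 1$, the point $a+t$ lies in the disc $\{x\in\mathbb{C}_{p}:|x-a|_{p}<|a|_{p}\}$, on which $v_{p}$ and the Teichm\"uller projection are constant, so $\omega_{v}(a+t)=\omega_{v}(a)$ --- the same device the paper itself uses in the proof of Lemma \ref{Lemma1}; and your telescoping evaluation of $\int_{\mathbb{Z}_{p}}(a+t)^{m}\,d\mu_{-1}(t)=E_{m}(a)$ from $E_{m}(x)+E_{m}(x+1)=2x^{m}$, the oddness of $p^{r}$, and $p^{r}\to 0$ in $\mathbb{Z}_{p}$ is sound (the polynomial identity transfers verbatim to $\mathbb{C}_{p}$). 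Your closing observation that it is $|a|_{p}>1$, and not merely $a\notin\mathbb{Z}_{p}$, which forces $\omega_{v}$ to be constant on the coset $a+\mathbb{Z}_{p}$ is exactly the right point of care.

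The gap is in your last step, and it is not merely cosmetic. With the definition (\ref{HEZ1}) used in this paper, $\zeta_E(s,a)=\sum_{n\geq 0}(-1)^{n}(n+a)^{-s}$ (no factor $2$), the route you propose --- the splitting $\zeta_E(s,a)=2^{-s}\left[\zeta(s,\tfrac{a}{2})-\zeta(s,\tfrac{a+1}{2})\right]$, the values $\zeta(-m,x)=-\tfrac{B_{m+1}(x)}{m+1}$, and the identity $E_{m}(x)=\tfrac{2^{m+1}}{m+1}\left[B_{m+1}(\tfrac{x+1}{2})-B_{m+1}(\tfrac{x}{2})\right]$ --- yields $\zeta_E(-m,a)=\tfrac{2^{m}}{m+1}\left[B_{m+1}(\tfrac{a+1}{2})-B_{m+1}(\tfrac{a}{2})\right]=\tfrac{1}{2}E_{m}(a)$, not $E_{m}(a)$. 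Concretely, at $m=1$ the splitting formula gives $\zeta_E(-1,a)=\tfrac{a}{2}-\tfrac{1}{4}=\tfrac{1}{2}E_{1}(a)$, whereas $E_{1}(a)=a-\tfrac{1}{2}$. So, carried out, your computation contradicts the second equality as printed here rather than proving it: the identity $E_{m}(a)=\zeta_E(-m,a)$ holds only in the normalization of \cite{KS}, where the Hurwitz-type Euler zeta function carries an extra factor $2$. You cannot simultaneously take (\ref{HEZ1}) as the definition and ``recover'' the stated identity; you must either adopt the factor-$2$ normalization explicitly, or record that under (\ref{HEZ1}) the conclusion should read $\zeta_{p,E}(1-m,a)=\frac{1}{\omega_{v}^{m}(a)}E_{m}(a)=\frac{2}{\omega_{v}^{m}(a)}\zeta_E(-m,a)$. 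The first equality --- the genuinely $p$-adic content of the theorem --- is established by your argument; only this bookkeeping of the constant needs repair.
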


\subsection{The $p$-adic operator $T_{p}^{a}$}

In this subsection,  we give a definition of $T_{p}^{a}$, the $p$-adic analogue of the operator $T$ (see (\ref{T})).
 Let $E=\{x\in\mathbb{C}_{p}: |x|_{p}<p^{-\frac{1}{p-1}}\}$ be the region of  convergence of the power series
$\sum_{k=0}^{\infty}\frac{x^{k}}{k!}$. The $p$-adic exponential function is given by
$$\textrm{exp}_{p}(x)=\sum_{k=0}^{\infty}\frac{x^{k}}{k!},\quad(x\in E)$$
(see \cite[p. 70]{SC}) and the $p$-adic Van Gorder's operator is defined as follows
\begin{equation}\label{Tp}
    T _{p}^{a}= \sum_{n=0}^\infty L_{p,n}^{a},
\end{equation}
where
\begin{equation}\label{Tp2}
\begin{aligned}
    L_{p,n}^{a}&:= P_{p,n}^{a}(s) \exp_{p}(nD), \\
    P_{p,n}^{a}(s) &:= \begin{cases}
     \frac{2}{s-1}& \text{ if } n = 0\\
        \frac{1}{\omega_{v}(a)}& \text{ if } n = 1\\
    \frac{1}{n!\omega_{v}^{n}(a)}\prod_{j = 1}^{n-1} (s-1+j) &\text{ if } n\geq 2,
    \end{cases}  \\
    \exp_{p}(nD) &:= id + \sum_{k=1}^\infty \frac{n^k}{k!} D_s^k
\end{aligned}
\end{equation}
for $D_s^k := \frac{\partial ^k}{\partial s^k}.$

\section{Main results}

In this section, we shall prove (\ref{main}). First we need to establish the following identity for $\zeta_{p,E}(s,a)$.

\begin{lemma}\label{Lemma1}
Let $\zeta_{p,E}(s,a)$ be the $p$-adic Hurwitz-type Euler zeta function. Then, for $s\in\mathbb{Z}_{p}$ with $s\neq 1$ and $a\in\mathbb{C}_{p}$ with $|a|_{p}>1$,
we have that
  \begin{equation}\label{main1}
\begin{aligned}
&\quad \frac{2}{s-1}\left(\zeta_{p,E}(s,a)-\langle a \rangle^{1-s}\right)
+\frac{1}{\omega_{v}(a)}\left(\zeta_{p,E}(s+1,a)-\langle a \rangle^{1-(s+1)}\right)\\
&\quad+\sum_{n=2}^\infty \frac{\prod_{j = 1}^{n - 1}(s-1+j)}{n!\omega_{v}^{n}(a)}\left(\zeta_{p,E}(s+n,a)-\langle a \rangle^{1-(s+n)}\right)\\
&=\frac{1}{s-1}\left(\langle a-1 \rangle^{1-s}-\langle a \rangle^{1-s}\right).
\end{aligned}
\end{equation}
\end{lemma}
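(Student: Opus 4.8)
The plan is to read the left-hand side of (\ref{main1}) as $\sum_{n=0}^\infty P_{p,n}^{a}(s)\bigl(\zeta_{p,E}(s+n,a)-\langle a\rangle^{1-(s+n)}\bigr)$ with $P_{p,n}^{a}(s)$ as in (\ref{Tp2}), insert the integral representation (\ref{E-zeta}) so that $\zeta_{p,E}(s+n,a)=\int_{\mathbb Z_p}\langle a+t\rangle^{1-s-n}\,d\mu_{-1}(t)$, and interchange the summation with the fermionic integral. To license the exchange I would estimate the coefficients: since $\frac{1}{n!}\prod_{j=1}^{n-1}(s-1+j)=\frac1n\binom{s+n-2}{n-1}$ is $1/n$ times a $p$-adic integer when $s\in\mathbb Z_p$, and $|\omega_v(a)|_p=p^{-v_p(a)}>1$ because $|a|_p>1$, one gets $|P_{p,n}^{a}(s)|_p\le p^{\,v_p(n)+n\,v_p(a)}\to 0$, uniformly in $t$ because $|\langle a+t\rangle^{1-s-n}|_p=1$. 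This uniform convergence, together with $|I_{-1}(f)|_p\le\sup_{t}|f(t)|_p$, is exactly what makes the series $p$-adically convergent for every $|a|_p>1$ (in contrast with the divergent complex case) and permits termwise integration.

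The core is then to evaluate, for $b\in\mathbb C_p$ with $|b|_p>1$, the sum $S(b):=\sum_{n=0}^\infty P_{p,n}^{a}(s)\langle b\rangle^{1-s-n}$, to be applied with $b=a+t$ (then integrated) and with $b=a$. The decisive non-archimedean input is that $\omega_v(a+t)=\omega_v(a)$ for all $t\in\mathbb Z_p$ when $|a|_p>1$, since both $v_p$ and the Teichm\"uller representative are unchanged under adding an element of $\mathbb Z_p$; hence $\omega_v(a)^{-n}\langle b\rangle^{-n}=(\omega_v(b)\langle b\rangle)^{-n}=b^{-n}$ for $b=a+t$ and for $b=a$. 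Factoring $\langle b\rangle^{1-s}$ and isolating the $n=0$ term reduces $S(b)$ to $\frac{2}{s-1}\langle b\rangle^{1-s}+\langle b\rangle^{1-s}\sum_{n=1}^\infty\frac{1}{n!}\bigl(\prod_{j=1}^{n-1}(s-1+j)\bigr)b^{-n}$. I would evaluate the remaining series by the binomial identity $\sum_{n=1}^\infty\frac{1}{n!}\bigl(\prod_{j=1}^{n-1}(s-1+j)\bigr)x^n=\frac{1}{s-1}\bigl((1-x)^{1-s}-1\bigr)$, valid for $|x|_p<1$ because $(1-x)^{1-s}=\sum_{m}\binom{1-s}{m}(-x)^m$ converges there; this is verified by matching coefficients of $x^n$.

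Next I would translate $(1-b^{-1})^{1-s}$ back into the $\langle\cdot\rangle$ notation. From $b=\omega_v(b)\langle b\rangle$, $b-1=\omega_v(b-1)\langle b-1\rangle$ and $\omega_v(b-1)=\omega_v(b)$ one has $1-b^{-1}=\frac{b-1}{b}=\frac{\langle b-1\rangle}{\langle b\rangle}$, a ratio of elements of $D=\{\,|x-1|_p<1\,\}$; multiplicativity of $y\mapsto y^{1-s}$ on $D$ (which follows by continuity from the integral case) then gives $\langle b\rangle^{1-s}(1-b^{-1})^{1-s}=\langle b-1\rangle^{1-s}$. Collecting terms yields the clean formula $S(b)=\frac{1}{s-1}\bigl(\langle b\rangle^{1-s}+\langle b-1\rangle^{1-s}\bigr)$. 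Taking $b=a+t$ and integrating, and recognizing $\int_{\mathbb Z_p}\langle(a-1)+t\rangle^{1-s}\,d\mu_{-1}(t)=\zeta_{p,E}(s,a-1)$, while taking $b=a$ for the subtracted part, turns the left-hand side of (\ref{main1}) into $\frac{1}{s-1}\bigl(\zeta_{p,E}(s,a)+\zeta_{p,E}(s,a-1)-\langle a\rangle^{1-s}-\langle a-1\rangle^{1-s}\bigr)$.

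The last step is to invoke the difference equation $\zeta_{p,E}(s,a)+\zeta_{p,E}(s,a-1)=2\langle a-1\rangle^{1-s}$, which comes from the defining property $\int_{\mathbb Z_p}f(t+1)\,d\mu_{-1}(t)+\int_{\mathbb Z_p}f(t)\,d\mu_{-1}(t)=2f(0)$ of the fermionic measure applied to $f(t)=\langle(a-1)+t\rangle^{1-s}$. Substituting this collapses the expression to $\frac{1}{s-1}\bigl(\langle a-1\rangle^{1-s}-\langle a\rangle^{1-s}\bigr)$, the right-hand side of (\ref{main1}). I expect the main obstacle to be the closed-form evaluation of $S(b)$, where three ingredients must be combined correctly and the convergence secured: the non-archimedean collapse $\omega_v(a+t)=\omega_v(a)$ turning $\omega_v(a)^{-n}\langle b\rangle^{-n}$ into $b^{-n}$, the binomial generating-function identity, and the multiplicativity of $\langle\cdot\rangle^{1-s}$ on $D$; the remaining steps are then bookkeeping and a single application of the fermionic difference equation.
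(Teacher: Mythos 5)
Your proposal is correct, and it reaches (\ref{main1}) by a genuinely different route from the paper. The paper works at the level of the Riemann sums defining the fermionic integral (\ref{-q-e2}): it performs an index shift (telescoping) on the finite alternating sums $\sum_{k=0}^{p^r-1}$, expands $\langle (k+a)/(k-1+a)\rangle^{s-1}=(1-\tfrac{1}{k+a})^{1-s}$ by the binomial series, rearranges, and only then lets $r\to\infty$, the boundary term $\langle p^{r}-1+a\rangle^{s-1}\to\langle a-1\rangle^{s-1}$ producing the right-hand side; the delicate step there is exchanging $\lim_{r\to\infty}$ with $\sum_{n\geq 2}$, which the paper secures with a separate uniform-convergence statement (Proposition~\ref{proposition-add}). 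You instead evaluate the generating series $S(b)=\sum_{n\geq 0}P_{p,n}^{a}(s)\langle b\rangle^{1-s-n}$ pointwise in closed form, $S(b)=\tfrac{1}{s-1}\left(\langle b\rangle^{1-s}+\langle b-1\rangle^{1-s}\right)$, integrate termwise against $\mu_{-1}$ --- your exchange of $\sum_{n}$ with $I_{-1}$, licensed by $|P_{p,n}^{a}(s)|_{p}\leq p^{v_{p}(n)+nv_{p}(a)}\to 0$ uniformly in $t$ together with $|I_{-1}(f)|_{p}\leq\sup_{t}|f(t)|_{p}$, plays exactly the role of Proposition~\ref{proposition-add} --- and then close with the difference equation $\zeta_{p,E}(s,a)+\zeta_{p,E}(s,a-1)=2\langle a-1\rangle^{1-s}$, a known property from \cite{KS} which you also rederive from the shift property $I_{-1}(f(\cdot+1))+I_{-1}(f)=2f(0)$. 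The two arguments share all the essential non-archimedean ingredients (the collapse $\omega_{v}(a+t)=\omega_{v}(a)$, the binomial expansion, the coefficient decay, and oddness of $p$, which enters your proof through the shift property rather than through $(-1)^{p^{r}}=-1$ in the telescoping), but the organization differs: your closed form for $S(b)$ cleanly isolates where the identity comes from, and the paper's telescoping is in effect an inline proof of the very difference equation you cite. What the paper's version buys is self-containedness, at the cost of manipulating doubly indexed finite sums; what yours buys is a shorter, more modular argument that reuses structural facts about $\zeta_{p,E}$ and $\mu_{-1}$, and it covers the same generality $s\in\mathbb{Z}_{p}$, $s\neq 1$, $a\in\mathbb{C}_{p}$ with $|a|_{p}>1$.
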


\begin{remark}
This is a $p$-adic analogue of complex identities  for the Hurwitz zeta function $\zeta(s,a)$ (see \cite[Lemma 1]{PK}) and  for the Riemann zeta function $\zeta(s)$ (see \cite[(3.3)]{PK}).
\end{remark}

\begin{proof}[Proof of Lemma \ref{Lemma1}.]
Fix $s\in\mathbb{Z}_{p}$ and $a\in\mathbb{C}_{p}$ with $|a|_{p}>1.$ For any $r\in\mathbb{N}$ we have
\begin{equation}\label{1}
\begin{aligned}
-\frac{1}{(s-1)\langle a\rangle^{s-1}}
&=\frac{1}{s-1}\left(\sum_{k=0}^{p^{r}-1}\frac{(-1)^{k+1}}{\langle k+a\rangle^{s-1}}+\sum_{k=1}^{p^{r}-1}\frac{(-1)^{k}}{\langle k+a\rangle^{s-1}}\right)\\
&=\frac{1}{s-1}\left(\sum_{k=1}^{p^{r}}\frac{(-1)^{k}}{\langle k-1+a\rangle^{s-1}}+\sum_{k=1}^{p^{r}-1}\frac{(-1)^{k}}{\langle k+a\rangle^{s-1}}\right)\\
&=\frac{1}{s-1}\left(\sum_{k=1}^{p^{r}-1}\frac{(-1)^{k}}{\langle k-1+a\rangle^{s-1}}+\sum_{k=1}^{p^{r}-1}\frac{(-1)^{k}}{\langle k+a\rangle^{s-1}}+\frac{(-1)^{p^{r}}}{\langle p^{r}-1+a\rangle^{s-1}}\right)\\
&=\frac{1}{s-1}\sum_{k=1}^{p^{r}-1}\frac{(-1)^{k}}{\langle k+a \rangle^{s-1}}\left(\left\langle \frac{k+a}{k-1+a}\right\rangle^{s-1}+1\right) \\
&\quad-\frac{1}{s-1}\frac{1}{\langle p^{r}-1+a\rangle^{s-1}}
\\&\quad\text{(since $p$ is an odd prime)} .
\end{aligned}
\end{equation}
Since $|a|_{p}>1$, for $k\in\mathbb{N}$, we have $|k+a|_{p}>1$, thus 
$$\left|1-\frac{1}{k+a}\right|_{p}=1$$ 
and  
$$\left|\frac{1}{1-\frac{1}{k+a}}-1\right|_{p}=\left|\frac{\frac{1}{k+a}}{1-\frac{1}{k+a}}\right|_{p}<1.$$
Then from (\ref{hat+}) we see that $$\widehat{\frac{1}{1-\frac{1}{k+a}}}=1$$ and  by (\ref{omega})  $$\omega_{v}\left( \frac{1}{1-\frac{1}{k+a}}\right)=1.$$
Again by (\ref{omega}), we have
\begin{equation}\label{2}
\begin{aligned}
\left\langle \frac{k+a}{k-1+a}\right\rangle&=\left\langle\frac{1}{1-\frac{1}{k+a}}\right\rangle\\
&=\omega_{v}^{-1}\left(\frac{1}{1-\frac{1}{k+a}}\right)\left(\frac{1}{1-\frac{1}{k+a}}\right)\\
&=\frac{1}{1-\frac{1}{k+a}}\\
&=\left(1-\frac{1}{k+a}\right)^{-1}.
\end{aligned}
\end{equation}
From \cite[p.140, Lemma 47.6]{SC}, for $s\in\mathbb{Z}_{p}$ we have the expansion
$$(1+x)^{s}=\sum_{n=0}^{\infty}\binom{s}{n}x^{n},\quad|x|_{p}<1.$$
Thus by (\ref{2}) we get
\begin{equation}\label{3}
\begin{aligned}
\left\langle \frac{k+a}{k-1+a}\right\rangle^{s-1}&=\left(1-\frac{1}{k+a}\right)^{1-s}=\sum_{n=0}^{\infty}\binom{1-s}{n}\frac{(-1)^{n}}{(k+a)^{n}}\\
&=1+\sum_{n=1}^{\infty} \frac{\prod_{j = 0}^{n - 1}(s-1+j)}{n!}\frac{1}{(k+a)^{n}}.
\end{aligned}
\end{equation}
Substituting the above expansion into (\ref{1}), we have
\begin{equation}\label{4}
\begin{aligned}
&\quad-\frac{1}{(s-1)\langle a\rangle^{s-1}}\\
&=\frac{1}{s-1}\sum_{k=1}^{p^{r}-1}\frac{(-1)^{k}}{\langle k+a \rangle^{s-1}}\left(\left(1+\sum_{n=1}^{\infty}\frac{\prod_{j = 0}^{n - 1}(s-1+j)}{n!}\frac{1}{(k+a)^{n}}\right)+1\right)\\
&\quad-\frac{1}{s-1}\frac{1}{\langle p^{r}-1+a\rangle^{s-1}}\\
&=\frac{2}{s-1}\sum_{k=1}^{p^{r}-1}\frac{(-1)^{k}}{\langle k+a \rangle^{s-1}} \\
&\quad+\frac{1}{s-1}\sum_{k=1}^{p^{r}-1}\frac{(-1)^{k}}{\langle k+a \rangle^{s-1}}\sum_{n=1}^\infty \frac{\prod_{j = 0}^{n - 1}(s-1+j)}{n!}\frac{1}{(k+a)^n}
\\
&\quad-\frac{1}{s-1}\frac{1}{\langle p^{r}-1+a\rangle^{s-1}}\\
&=\frac{2}{s-1}\sum_{k=1}^{p^{r}-1}\frac{(-1)^{k}}{\langle k+a \rangle^{s-1}} \\
&\quad+\frac{1}{s-1}\sum_{k=1}^{p^{r}-1}\frac{(-1)^{k}}{\langle k+a \rangle^{s-1}}\left(\frac{s-1}{k+a}+\sum_{n=2}^\infty \frac{\prod_{j = 0}^{n - 1}(s-1+j)}{n!}\frac{1}{(k+a)^n}\right)
\\
&\quad-\frac{1}{s-1}\frac{1}{\langle p^{r}-1+a\rangle^{s-1}}.
\end{aligned}
\end{equation}
Since $|a|_{p}>1$ and $k\in\mathbb{N}$, by (\ref{omega}) we have 
$$\omega_{v}(k+a)=\omega_{v}(a)$$ 
and
$$k+a=\omega_{v}(k+a)\langle k+a \rangle=\omega_{v}(a)\langle k+a \rangle.$$
Substituting the above identity into (\ref{4}), we get
\begin{equation}\label{4+}
\begin{aligned}
-\frac{1}{(s-1)\langle a\rangle^{s-1}}
&=\frac{2}{s-1}\sum_{k=1}^{p^{r}-1}\frac{(-1)^{k}}{\langle k+a \rangle^{s-1}}+\frac{1}{\omega_{v}(a)}\sum_{k=1}^{p^{r}-1}\frac{(-1)^{k}}{\langle k+a \rangle^{s}}\\&\quad+\sum_{n=2}^\infty \frac{\prod_{j = 1}^{n - 1}(s-1+j)}{n!\omega_{v}^{n}(a)}\sum_{k=1}^{p^{r}-1}\frac{(-1)^{k}}{\langle k+a \rangle^{s+n-1}} \\
&\quad-\frac{1}{s-1}\frac{1}{\langle p^{r}-1+a\rangle^{s-1}}.\\\end{aligned}
\end{equation}
Taking $r\to\infty$ in the above equality, by the continuity of the
$p$-adic function $\langle a \rangle^{s}$ in $a$ (see the last sentence of Proposition \ref{analytic}), we have $$\lim_{r\to\infty}\langle p^{r}-1+a\rangle^{s-1} =\langle a-1\rangle^{s-1}$$ and
 \begin{equation}\label{5}
\begin{aligned}
-\frac{1}{(s-1)\langle a\rangle^{s-1}}
&=\frac{2}{s-1}\lim_{r\to\infty}\sum_{k=1}^{p^{r}-1}\frac{(-1)^{k}}{\langle k+a \rangle^{s-1}}+\frac{1}{\omega_{v}(a)}\lim_{r\to\infty}\sum_{k=1}^{p^{r}-1}\frac{(-1)^{k}}{\langle k+a \rangle^{s}}
\\
&\quad+\sum_{n=2}^\infty \frac{\prod_{j = 1}^{n - 1}(s-1+j)}{n!\omega_{v}^{n}(a)}\lim_{r\to\infty}\sum_{k=1}^{p^{r}-1}\frac{(-1)^{k}}{\langle k+a \rangle^{s+n-1}}\\
&\quad (\textrm{see Proposition \ref{proposition-add}})\\
&\quad-\frac{1}{s-1}\frac{1}{\langle a-1\rangle^{s-1}}\\
&=\frac{2}{s-1}\left(\lim_{r\to\infty}\sum_{k=0}^{p^{r}-1}\frac{(-1)^{k}}{\langle k+a \rangle^{s-1}}-\frac{1}{\langle a \rangle^{s-1}}\right)
\\
&\quad+\frac{1}{\omega_{v}(a)}\left(\lim_{r\to\infty}\sum_{k=0}^{p^{r}-1}\frac{(-1)^{k}}{\langle k+a \rangle^{s}}-\frac{1}{\langle a \rangle^{s}}\right)\\
&\quad+\sum_{n=2}^\infty \frac{\prod_{j = 1}^{n - 1}(s-1+j)}{n!\omega_{v}^{n}(a)}\left(\lim_{r\to\infty}\sum_{k=0}^{p^{r}-1}\frac{(-1)^{k}}{\langle k+a \rangle^{s+n-1}}-\frac{1}{\langle a \rangle^{s+n-1}}\right) \\
&\quad-\frac{1}{s-1}\frac{1}{\langle a-1\rangle^{s-1}}.
\end{aligned}
\end{equation}
Then by the definitions of the fermionic $p$-adic integral (\ref{integral}) and the $p$-adic Hurwitz-type zeta function $\zeta_{p,E}(s,a)$ (\ref{E-zeta}), we have
\begin{equation}\label{6}
\begin{aligned}
-\frac{1}{(s-1)\langle a\rangle^{s-1}}
&=\frac{2}{s-1}\left(\int_{\mathbb{Z}_{p}}\langle k+a \rangle^{1-s}d\mu_{-1}(a)-\frac{1}{\langle a \rangle^{s-1}}\right)\\
&\quad+\frac{1}{\omega_{v}(a)}\left(\int_{\mathbb{Z}_{p}}\langle k+a \rangle^{-s}d\mu_{-1}(a)-\frac{1}{\langle a \rangle^{s}}\right)\\
&\quad+\sum_{n=2}^\infty \frac{\prod_{j = 1}^{n - 1}(s-1+j)}{n!\omega_{v}^{n}(a)}\left(\int_{\mathbb{Z}_{p}}\langle k+a \rangle^{1-(s+n)}d\mu_{-1}(a)-\frac{1}{\langle a \rangle^{s+n-1}}\right) \\
&\quad-\frac{1}{s-1}\frac{1}{\langle a-1\rangle^{s-1}}\\
&=\frac{2}{s-1}\left(\zeta_{p,E}(s,a)-\langle a \rangle^{1-s}\right)
\\&\quad+\frac{1}{\omega_{v}(a)}\left(\zeta_{p,E}(s+1,a)-\langle a \rangle^{1-(s+1)}\right)\\
&\quad+\sum_{n=2}^\infty \frac{\prod_{j = 1}^{n - 1}(s-1+j)}{n!\omega_{v}^{n}(a)}\left(\zeta_{p,E}(s+n,a)-\langle a \rangle^{1-(s+n)}\right) \\
&\quad-\frac{1}{s-1}\frac{1}{\langle a-1\rangle^{s-1}},
\end{aligned}
\end{equation}
which is equivalent to
 \begin{equation}\label{6-1}
\begin{aligned}
&\quad \frac{2}{s-1}\left(\zeta_{p,E}(s,a)-\langle a \rangle^{1-s}\right)
+\frac{1}{\omega_{v}(a)}\left(\zeta_{p,E}(s+1,a)-\langle a \rangle^{1-(s+1)}\right)\\
&\quad+\sum_{n=2}^\infty \frac{\prod_{j = 1}^{n - 1}(s-1+j)}{n!\omega_{v}^{n}(a)}\left(\zeta_{p,E}(s+n,a)-\langle a \rangle^{1-(s+n)}\right)\\
&=\frac{1}{s-1}\left(\langle a-1 \rangle^{1-s}-\langle a \rangle^{1-s}\right).
\end{aligned}
\end{equation}
This completes the proof.
\end{proof}

As pointed out by the referee, in order to move the limit to the inside of the
summation $\sum_{n=2}^{\infty}$ in (\ref{5}) of the above lemma, we need to show that the convergence of the inner
limit is uniform  for $r\in\mathbb{N}$. To this end, we add the following proposition.

\begin{proposition}\label{proposition-add}
For $s\in\mathbb{Z}_{p}$ and $a\in\mathbb{C}_{p}$ with $|a|_{p}>1$, the series $$\sum_{n=2}^\infty \frac{\prod_{j = 1}^{n - 1}(s-1+j)}{n!\omega_{v}^{n}(a)}\sum_{k=1}^{p^{r}-1}\frac{(-1)^{k}}{\langle k+a \rangle^{s+n-1}}$$  converges uniformly for $r\in\mathbb{N}$ and 
\begin{equation}\begin{aligned}&\quad\lim_{r\to\infty}\sum_{n=2}^\infty \frac{\prod_{j = 1}^{n - 1}(s-1+j)}{n!\omega_{v}^{n}(a)}\sum_{k=1}^{p^{r}-1}\frac{(-1)^{k}}{\langle k+a \rangle^{s+n-1}}\\
&=\sum_{n=2}^\infty \frac{\prod_{j = 1}^{n - 1}(s-1+j)}{n!\omega_{v}^{n}(a)}\lim_{r\to\infty}\sum_{k=1}^{p^{r}-1}\frac{(-1)^{k}}{\langle k+a \rangle^{s+n-1}}.\end{aligned} \end{equation}\end{proposition}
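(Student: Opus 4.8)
The plan is to exploit the non-archimedean setting, in which a series $\sum_{n\geq 2}c_{n,r}$ of elements of $\mathbb{C}_{p}$ converges, and converges uniformly in the parameter $r$, as soon as its general term tends to $0$ uniformly in $r$; that is, as soon as $\sup_{r\in\mathbb{N}}|c_{n,r}|_{p}\to 0$ as $n\to\infty$. Here I write $c_{n,r}=\frac{\prod_{j=1}^{n-1}(s-1+j)}{n!\,\omega_{v}^{n}(a)}\sum_{k=1}^{p^{r}-1}\frac{(-1)^{k}}{\langle k+a\rangle^{s+n-1}}$ for the $n$-th summand. Once a bound on $|c_{n,r}|_{p}$ that is independent of $r$ and decays to $0$ in $n$ is in hand, the partial sums $\sum_{n=2}^{N}c_{n,r}$ are uniformly Cauchy in $r$, hence uniformly convergent by completeness of $\mathbb{C}_{p}$.

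First I would dispose of the inner sum. Since $|a|_{p}>1$ and $|k|_{p}\leq 1$ for $k\in\mathbb{N}$, the ultrametric inequality gives $|k+a|_{p}=|a|_{p}>1$, so $\langle k+a\rangle$ lies in the disc $D$ and $|\langle k+a\rangle|_{p}=1$. As $s+n-1\in\mathbb{Z}_{p}$, the expansion (\ref{integral}) shows that only the constant term of $\langle k+a\rangle^{s+n-1}$ has absolute value $1$ while all others are strictly smaller, whence $|\langle k+a\rangle^{s+n-1}|_{p}=1$. Consequently each summand of the inner sum has absolute value $1$, and the ultrametric inequality yields $\left|\sum_{k=1}^{p^{r}-1}(-1)^{k}\langle k+a\rangle^{-(s+n-1)}\right|_{p}\leq 1$ for every $r$.

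The decisive step is a sharp estimate for the coefficient. Recognizing the numerator as a product of $n-1$ consecutive shifted factors, I would rewrite $\frac{\prod_{j=1}^{n-1}(s-1+j)}{n!}=\frac{1}{n}\binom{s+n-2}{n-1}$ and invoke the standard fact that $x\mapsto\binom{x}{m}$ sends $\mathbb{Z}_{p}$ into $\mathbb{Z}_{p}$, so that $\left|\binom{s+n-2}{n-1}\right|_{p}\leq 1$ for $s\in\mathbb{Z}_{p}$. Combining this with $|\omega_{v}(a)|_{p}=|a|_{p}$ (which follows from $\omega_{v}(a)=a/\langle a\rangle$ and $|\langle a\rangle|_{p}=1$ via (\ref{omega})) gives
\[
\left|\frac{\prod_{j=1}^{n-1}(s-1+j)}{n!\,\omega_{v}^{n}(a)}\right|_{p}
=\frac{1}{|a|_{p}^{n}}\left|\frac{1}{n}\binom{s+n-2}{n-1}\right|_{p}
\leq\frac{p^{v_{p}(n)}}{|a|_{p}^{n}}\leq\frac{n}{|a|_{p}^{n}}.
\]
Together with the inner-sum bound this yields $|c_{n,r}|_{p}\leq n\,|a|_{p}^{-n}$ for every $r$, and since $|a|_{p}>1$ the right-hand side tends to $0$; this establishes the uniform convergence.

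I expect this binomial rewriting to be the crucial point. The naive bound $|1/n!|_{p}\leq p^{(n-1)/(p-1)}$ controls the coefficient only by $(p^{1/(p-1)}/|a|_{p})^{n}$ and would force the stronger hypothesis $|a|_{p}>p^{1/(p-1)}$, failing precisely in the range $1<|a|_{p}\leq p^{1/(p-1)}$ permitted here; passing through the $p$-adic integrality of $\binom{s+n-2}{n-1}$ replaces the runaway factor $p^{(n-1)/(p-1)}$ by the harmless $p^{v_{p}(n)}\leq n$. Finally, to justify interchanging $\lim_{r\to\infty}$ with $\sum_{n\geq 2}$ I would appeal to the Moore--Osgood theorem on iterated limits: for each fixed $n$ the termwise limit $\lim_{r\to\infty}c_{n,r}$ exists, since $t\mapsto\langle t+a\rangle^{1-(s+n)}$ belongs to $UD(\mathbb{Z}_{p})$ (Proposition \ref{analytic}) and hence the inner sums converge to the corresponding fermionic $p$-adic integral, namely a value of $\zeta_{p,E}$ as in (\ref{E-zeta}); combined with the uniform convergence in $r$ just obtained, these two facts permit swapping the order of the limits and give the claimed identity.
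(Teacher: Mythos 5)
Your proof is correct, and its skeleton matches the paper's: both rewrite the coefficient as $\frac{1}{n}\binom{s+n-2}{n-1}$, use the $p$-adic integrality of binomial coefficients on $\mathbb{Z}_p$ together with $\left|\frac{1}{n}\right|_p=p^{v_p(n)}\le n$ and $\left|\frac{1}{\omega_v^{n}(a)}\right|_p=p^{nv_p(a)}$ to get the decisive coefficient bound $n\, p^{nv_p(a)}$ (the paper's (\ref{est1})), bound the inner alternating sums uniformly in $r$, and finish with an interchange-of-limits theorem (your Moore--Osgood citation is the same statement as the paper's appeal to Lang). Where you genuinely depart from the paper is the uniform bound on the inner sums, and your route is both simpler and sharper. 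The paper spends the bulk of its proof establishing that $f(y,s)=\langle y+a\rangle^{s}$ is continuous on the compact set $\mathbb{Z}_p\times\mathbb{Z}_p$ --- via the estimate $|\log_p\langle y+a\rangle|_p\le p^{-1}$, the representation $\langle y+a\rangle^{s}=\exp_p(s\log_p\langle y+a\rangle)$, and the Weierstrass maximum value theorem --- which yields an unspecified constant $N_a$ bounding all terms, hence (by the ultrametric inequality) all inner sums. You instead observe that $\langle k+a\rangle$ always lies in $D=\{x:|x-1|_p<1\}$, so for any exponent in $\mathbb{Z}_p$ the binomial expansion (\ref{integral}) has constant term $1$ and all remaining terms of norm strictly less than $1$; hence $|\langle k+a\rangle^{s+n-1}|_p=1$ exactly, and every inner sum is bounded by $1$. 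In effect you show that the paper's $N_a$ can be taken equal to $1$, eliminating the compactness/continuity machinery entirely. The remaining ingredients --- uniform Cauchyness of the partial sums (equivalently, the paper's Weierstrass-test argument) and the existence of the termwise limits $\lim_{r\to\infty}$ of the inner sums as fermionic integrals via (\ref{E-zeta}) --- are handled essentially identically in both proofs, so your argument is a valid and somewhat more economical proof of Proposition \ref{proposition-add}.
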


\begin{proof}
For $n\geq 2$ we have
$$
 \frac{\prod_{j = 1}^{n - 1}(s-1+j)}{n!}=\binom{s+n-2}{n-1} \frac{1}{n}.
$$
By \cite[p. 138, Proposition 47.2(v)]{SC}, for $s\in\mathbb{Z}_{p},$
$$\left|\binom{s+n-2}{n-1}\right|_{p}\leq 1.$$
Since $|n|_{p}=\left(\frac{1}{p}\right)^{v_{p}(n)}$,
we have 
$$\left|\frac{1}{n}\right|_{p}=p^{v_{p}(n)}\leq n,$$
thus for $n\geq 2$,
\begin{equation}\label{l2}
\left|\frac{\prod_{j = 1}^{n - 1}(s-1+j)}{n!}\right|_{p}=\left|\binom{s+n-2}{n-1} \frac{1}{n}\right|_{p}\leq n.
\end{equation}
Since for any $a\in\mathbb{C}_{p}^{\times}$, $\hat{a}\in\mu$ is a root of unity, we have $|\hat{a}|_{p}=1$
and  by (\ref{omega})
$$|\omega_{v}(a)|_{p}=|p^{v_{p}(a)}\hat{a}|_{p}=|p|_{p}^{v_{p}(a)}=\left(\frac{1}{p}\right)^{v_{p}(a)},$$
thus
\begin{equation}\label{l3}
\left|\frac{1}{\omega_{v}^{n}(a)}\right|_{p}=p^{nv_{p}(a)}.
\end{equation}
Combining (\ref{l2}) and (\ref{l3}),  for any fixed $a\in\mathbb{C}_{p}$ with $|a|_{p}>1$ we have
\begin{equation}\label{est1}
\left|\frac{\prod_{j = 1}^{n - 1}(s-1+j)}{n!\omega_{v}^{n}(a)}
\right|_{p}\leq p^{nv_{p}(a)}\cdot n.
\end{equation}

Now fix $a\in\mathbb{C}_{p}$ with $|a|_{p}>1,$ we know that  $\log_{p}\langle y+a \rangle$ is a continuous function in $y\in\mathbb{Z}_{p}.$
Since $\mathbb{Z}_{p}$ is compact in the $p$-adic topology, by the Weierstrass maximum value theorem (\cite[p. 61, Theorem 4.3]{Lang2}) there exists a $y_{0}\in\mathbb{Z}_{p}$ such that 
\begin{equation} \label{s3} |\log_{p}\langle y+a \rangle|_{p} \leq |\log_{p}\langle y_{0}+a \rangle|_{p}\end{equation}
for all $y\in\mathbb{Z}_{p}$. 
Since $\langle y_{0}+a \rangle-1\in (p)$, we have
\begin{equation}\label{s5} |\langle y_{0}+a \rangle-1|_{p} \leq p^{-1} < p^{-1/(p-1)}\end{equation}
and by \cite[p. 51, Lemma 5.5]{Wa},  \begin{equation}\label{s6} |\log_{p}\langle y_{0}+a \rangle|_{p}=|\langle y_{0}+a \rangle-1|_{p}\leq p^{-1}.\end{equation}
Combining  (\ref{s3}), (\ref{s5}) and (\ref{s6}), we see that 
\begin{equation}\label{s1}
 |\log_{p}\langle y+a \rangle|_{p}\leq p^{-1}
 \end{equation}
 for all $y\in\mathbb{Z}_{p}$.
 By \cite[p. 1245, (2.22)]{TP}, for $(x,s)\in \mathbb{C}_{p}^{\times}\times\mathbb{C}_{p}$ satisfying $|s|_{p} < p^{-1/(p-1)}|\log_{p}\langle x \rangle|_{p}^{-1},$
 we have 
 \begin{equation}\label{s2}
 \langle x \rangle^{s}=\exp_{p}(s\log_{p} \langle x \rangle).
 \end{equation}
Let $D=\mathbb{Z}_{p}\times\mathbb{Z}_{p}$.  For $(y,s)\in D$, at first we have $|s|_{p}\leq 1$ and
by (\ref{s1}), we see that $p^{-1/(p-1)} |\log_{p}\langle y+a \rangle|_{p}^{-1}\geq p^{-1/(p-1)}\cdot p=p^{\frac{p-2}{p-1}}>1,$
thus $|s|_{p} <p^{-1/(p-1)}|\log_{p}\langle y+a \rangle|_{p}^{-1}.$
Then by (\ref{s2}) we have
 \begin{equation}\label{s4}
 \langle y+a \rangle^{s}=\exp_{p}(s\log_{p} \langle y+a \rangle)
 \end{equation} 
 for $(y,s)\in D$. Hence the two variable function
 $f(y,s)=\langle y+a\rangle^{s}$ is continuous on the domain
 $D$.
Since $D=\mathbb{Z}_{p}\times\mathbb{Z}_{p}$ is compact in the $p$-adic topology, for any fixed $a\in\mathbb{C}_{p}$ with $|a|_{p}>1$ it is bounded as a function for $(y,s)\in D$,
so there exists a positive constant $N_{a}$ such that for any $k\in\mathbb{N}$ and  $n\in\mathbb{N},
$\begin{equation}\label{l1+}\begin{aligned}\left|\frac{(-1)^{k}}{\langle k+a \rangle^{s+n-1}}\right|_{p}&=\left|(-1)^{k} \langle k+a \rangle^{1-s-n}\right|_{p}\\
&=\left|(-1)^{k} f(k,1-s-n)\right|_{p}\\&\leq N_{a} \end{aligned}\end{equation}
and by the non-archimedean property,  for any $r\in\mathbb{N}$,
\begin{equation}\label{l2+} \left|\sum_{k=1}^{p^{r}-1}\frac{(-1)^{k}}{\langle k+a \rangle^{s+n-1}}\right|_{p} \leq N_{a}.\end{equation}
Then combining (\ref{est1}) and (\ref{l2+}), for any fixed $a\in\mathbb{C}_{p}$ with $|a|_{p}>1$ and for any $r\in\mathbb{N}$ we have 
\begin{equation}\label{l3+}
\left|\frac{\prod_{j = 1}^{n - 1}(s-1+j)}{n!\omega_{v}^{n}(a)}\sum_{k=1}^{p^{r}-1}\frac{(-1)^{k}}{\langle k+a \rangle^{s+n-1}}\right|_{p}\leq N_{a}\cdot p^{nv_{p}(a)}\cdot n.
\end{equation} 
 Since $|a|_{p}>1$, i.e., $v_{p}(a)<0$, we have $\lim_{n\to\infty} N_{a}\cdot p^{nv_{p}(a)}\cdot n=0,$
which implies the series 
\begin{equation}N_{a} \label{l4+} \sum_{n=2}^{\infty}p^{nv_{p}(a)} n\end{equation}  is  convergent.
Finally by (\ref{l3+}), (\ref{l4+}) and the Weierstrass test (see \cite[p. 230, Theorem 5.1]{Lang2}), we see that the series 
$$\sum_{n=2}^\infty \frac{\prod_{j = 1}^{n - 1}(s-1+j)}{n!\omega_{v}^{n}(a)}\sum_{k=1}^{p^{r}-1}\frac{(-1)^{k}}{\langle k+a \rangle^{s+n-1}}$$ 
converges uniformly for  $r\in\mathbb{N}$. Then applying \cite[p. 185, Theorem 3.5]{Lang2} we conclude that  the limit $r\to\infty$  can be moved to the inside of the
above series, which is the desired result. \end{proof}

The following result ensures the convergence of (\ref{main1}), which is  a $p$-adic analogue of \cite[Lemma 2]{PK}.

\begin{lemma}\label{Lemma2}
The left hand side of (\ref{main1}) in Lemma \ref{Lemma1} converges  $p$-adically for $s\in\mathbb{Z}_{p}$ with $s\neq 1$ and $a\in\mathbb{C}_{p}$ with $|a|_{p}>1$.
\end{lemma}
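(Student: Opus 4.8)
The plan is to exploit the non-archimedean nature of $\mathbb{C}_{p}$: since $\mathbb{C}_{p}$ is complete and the absolute value is non-archimedean, an infinite series converges if and only if its general term tends to $0$. The first two summands on the left-hand side of (\ref{main1}), namely $\frac{2}{s-1}(\zeta_{p,E}(s,a)-\langle a\rangle^{1-s})$ and $\frac{1}{\omega_{v}(a)}(\zeta_{p,E}(s+1,a)-\langle a\rangle^{-s})$, are manifestly well-defined finite elements of $\mathbb{C}_{p}$, because $s\neq 1$ and $\zeta_{p,E}$ is defined at $s$ and $s+1$ by Definition \ref{p-E-zeta}. Hence the whole question reduces to the convergence of the tail series $\sum_{n=2}^{\infty}\frac{\prod_{j=1}^{n-1}(s-1+j)}{n!\omega_{v}^{n}(a)}\bigl(\zeta_{p,E}(s+n,a)-\langle a\rangle^{1-(s+n)}\bigr)$, and it suffices to show that its $n$-th term goes to $0$ $p$-adically.

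The general term factors as a coefficient times a zeta-difference, and I would estimate each factor separately. For the coefficient I would invoke the bound (\ref{est1}) already established in the proof of Proposition \ref{proposition-add}, giving
\begin{equation*}
\left|\frac{\prod_{j=1}^{n-1}(s-1+j)}{n!\omega_{v}^{n}(a)}\right|_{p}\leq p^{nv_{p}(a)}\cdot n.
\end{equation*}
For the zeta-difference I would first peel off the $k=0$ term from the fermionic integral representation, writing
\begin{equation*}
\zeta_{p,E}(s+n,a)-\langle a\rangle^{1-(s+n)}=\lim_{r\to\infty}\sum_{k=1}^{p^{r}-1}\frac{(-1)^{k}}{\langle k+a\rangle^{s+n-1}},
\end{equation*}
exactly as is done passing from (\ref{1}) to (\ref{5})--(\ref{6}) in Lemma \ref{Lemma1}. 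Then the uniform bound (\ref{l2+}) from Proposition \ref{proposition-add} gives $\bigl|\sum_{k=1}^{p^{r}-1}(-1)^{k}\langle k+a\rangle^{1-s-n}\bigr|_{p}\leq N_{a}$ for every $r$, and since the absolute value is continuous I can pass to the limit $r\to\infty$ to obtain the $n$-independent estimate $\bigl|\zeta_{p,E}(s+n,a)-\langle a\rangle^{1-(s+n)}\bigr|_{p}\leq N_{a}$.

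Combining the two estimates, the $n$-th term of the tail is bounded in absolute value by $N_{a}\cdot p^{nv_{p}(a)}\cdot n$. Because $|a|_{p}>1$ means $v_{p}(a)<0$, the factor $p^{nv_{p}(a)}=(p^{v_{p}(a)})^{n}$ decays geometrically and dominates the polynomial factor $n$, so $N_{a}\cdot p^{nv_{p}(a)}\cdot n\to 0$ as $n\to\infty$; hence the general term tends to $0$ and the series converges. I expect the only genuine difficulty to be securing the $n$-uniform bound $N_{a}$ on the zeta-differences, but this is precisely what the compactness argument of Proposition \ref{proposition-add} already supplies (the boundedness of $f(y,s)=\langle y+a\rangle^{s}$ on the compact set $\mathbb{Z}_{p}\times\mathbb{Z}_{p}$), so the present lemma follows with essentially no new work beyond assembling these estimates.
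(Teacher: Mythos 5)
Your proof is correct, and its skeleton --- the non-archimedean criterion that a series in $\mathbb{C}_{p}$ converges iff its general term tends to zero, the coefficient bound (\ref{est1}), and an $n$-uniform bound on the zeta-differences --- is the same as the paper's. Where you genuinely differ is in how that uniform bound is obtained. The paper gets it in one line: by Proposition \ref{analytic} and Theorem \ref{analytic-zeta}, the function $s\mapsto\zeta_{p,E}(s,a)-\langle a\rangle^{1-s}$ is $C^{\infty}$, in particular continuous, on the compact set $\mathbb{Z}_{p}$, hence bounded by a constant $M_{a}$ as in (\ref{l8}); since $s+n\in\mathbb{Z}_{p}$ for every $n$, the same $M_{a}$ bounds every term. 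You instead return to the fermionic-integral definition (\ref{E-zeta}), peel off the $k=0$ term to write $\zeta_{p,E}(s+n,a)-\langle a\rangle^{1-(s+n)}$ as $\lim_{r\to\infty}\sum_{k=1}^{p^{r}-1}(-1)^{k}\langle k+a\rangle^{1-s-n}$, bound the partial sums by the constant $N_{a}$ of (\ref{l1+})--(\ref{l2+}) (compactness of $\mathbb{Z}_{p}\times\mathbb{Z}_{p}$ applied to $f(y,\sigma)=\langle y+a\rangle^{\sigma}$), and pass to the limit using continuity of $|\cdot|_{p}$. Both routes are compactness arguments, but yours buys a small economy of hypotheses: it needs only the definition of $\zeta_{p,E}$ as a limit of alternating Riemann sums together with estimates already proved in Proposition \ref{proposition-add}, and never invokes the continuity of $\zeta_{p,E}$ itself from Theorem \ref{analytic-zeta}, which is a result cited from \cite{KS}; the paper's argument is shorter but leans on that theorem. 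One point you should make explicit: passing to the limit $r\to\infty$ in your estimate presupposes that the limit exists, i.e., that the fermionic integral defining $\zeta_{p,E}(s+n,a)$ converges; this is guaranteed because the integrand lies in $UD(\mathbb{Z}_{p})$ (Definition \ref{p-E-zeta} and (\ref{-q-e2})), but it is the one ingredient your argument silently borrows.
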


\begin{proof}
By Proposition \ref{analytic} and Theorem \ref{analytic-zeta}, for $a\in\mathbb{C}_{p}$ with $|a|_{p}>1$, $\zeta_{p,E}(s,a)-\langle a \rangle^{1-s}$
is a $C^{\infty}$ function of $s$ on $\mathbb{Z}_{p}$. Since $\mathbb{Z}_{p}$ is compact in the $p$-adic topology, for any fixed $a\in\mathbb{C}_{p}$ with $|a|_{p}>1$ it is bounded as a function for $s\in\mathbb{Z}_{p}$,
i.e., there exists a positive constant $M_{a}$ such that
\begin{equation}\left|\label{l8} \zeta_{p,E}(s,a)-\langle a \rangle^{1-s}\right|_{p}\leq M_{a}. \end{equation}
Then combining (\ref{est1}) and (\ref{l8}),  for any fixed $a\in\mathbb{C}_{p}$ with $|a|_{p}>1$ we have
\begin{equation}
\left|\frac{\prod_{j = 1}^{n - 1}(s-1+j)}{n!\omega_{v}^{n}(a)}
\left(\zeta_{p,E}(s+n,a)-\langle a \rangle^{1-(s+n)}\right)\right|_{p}\leq M_{a}\cdot p^{nv_{p}(a)}\cdot n.
\end{equation}
Since $|a|_{p}>1$, i.e., $v_{p}(a)<0$, we have $\lim_{n\to\infty} M_{a}\cdot p^{nv_{p}(a)}\cdot n=0,$
which implies $$\lim_{n\to\infty}\left|\frac{\prod_{j = 1}^{n - 1}(s-1+j)}{n!\omega_{v}^{n}(a)}\left(\zeta_{p,E}(s+n,a)-\langle a \rangle^{1-(s+n)}\right)\right|_{p}=0,$$
thus the series $$\sum_{n=2}^\infty \frac{\prod_{j = 1}^{n - 1}(s-1+j)}{n!\omega_{v}^{n}(a)}\left(\zeta_{p,E}(s+n,a)-\langle a \rangle^{1-(s+n)}\right)$$
is convergent under the $p$-adic topology.
\end{proof}

The above result implies the following theorem.

\begin{theorem}\label{main theorem}
Let $T_{p}^{a}$  be as defined in (\ref{Tp}). Then $\zeta_{p,E}(s,a)$ formally satisfies the following differential equation
\begin{equation}\label{main2}
T_{p}^{a}\left[\zeta_{p,E}(s,a)-\langle a\rangle^{1-s}\right]=\frac{1}{s-1}\left(\langle a-1\rangle^{1-s}-\langle a\rangle^{1-s}\right)
\end{equation}
for $s\in\mathbb{Z}_{p}$ with $s\neq 1$ and $a\in\mathbb{C}_{p}$ with $|a|_{p}>1$.
\end{theorem}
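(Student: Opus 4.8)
The plan is to reduce the theorem to the two lemmas already proved: Lemma \ref{Lemma1} supplies the closed-form value, and Lemma \ref{Lemma2} supplies the $p$-adic convergence. The only new ingredient is to recognize that applying the operator $T_p^a$ to $g(s) := \zeta_{p,E}(s,a) - \langle a\rangle^{1-s}$ reproduces, term by term, the left-hand side of the identity (\ref{main1}).

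First I would interpret $\exp_p(nD)$ as the shift $s \mapsto s+n$. For a function $g$ that is $C^\infty$ in $s$ on $\mathbb{Z}_p$, its $p$-adic Taylor expansion gives, at least formally,
\[
\exp_p(nD)[g](s) = \sum_{k=0}^\infty \frac{n^k}{k!}\, D_s^k g(s) = g(s+n).
\]
By Theorem \ref{analytic-zeta} the function $\zeta_{p,E}(s,a)$ is $C^\infty$ in $s$ on $\mathbb{Z}_p$, and by Proposition \ref{analytic} so is $\langle a\rangle^{1-s}$; hence $g$ is $C^\infty$ in $s$ on $\mathbb{Z}_p$ for every fixed $a \in \mathbb{C}_p$ with $|a|_p > 1$, so that all the derivatives $D_s^k g$ appearing in $\exp_p(nD)$ are well defined. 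Substituting $\exp_p(nD)[g](s) = g(s+n)$ into the definition (\ref{Tp})--(\ref{Tp2}) of $T_p^a$ and reading off the three cases of $P_{p,n}^a(s)$ --- the factor $\frac{2}{s-1}$ when $n=0$, the factor $\frac{1}{\omega_v(a)}$ when $n=1$, and $\frac{1}{n!\,\omega_v^n(a)}\prod_{j=1}^{n-1}(s-1+j)$ when $n \geq 2$ --- yields
\[
T_p^a[g](s) = \sum_{n=0}^\infty P_{p,n}^a(s)\, g(s+n),
\]
which is exactly the left-hand side of (\ref{main1}).

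Lemma \ref{Lemma1} then evaluates this sum as $\frac{1}{s-1}\bigl(\langle a-1\rangle^{1-s} - \langle a\rangle^{1-s}\bigr)$, which is the right-hand side of (\ref{main2}), while Lemma \ref{Lemma2} guarantees that the series on the left converges $p$-adically for $s \in \mathbb{Z}_p$ with $s \neq 1$ and $a \in \mathbb{C}_p$ with $|a|_p > 1$. I expect the main obstacle to be the justification of the shift action $\exp_p(nD)[g](s) = g(s+n)$: $p$-adically, $C^\infty$ regularity does not by itself force the Taylor series $\sum_k \frac{n^k}{k!} D_s^k g(s)$ to converge to $g(s+n)$, and one must control where $nD$ lands $g$ inside the region $E$ of convergence of $\exp_p$. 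This is precisely why the theorem is phrased as a \emph{formal} identity, with the genuine $p$-adic convergence of the resulting series isolated in Lemma \ref{Lemma2}. Once the shift action is accepted, the theorem is an immediate consequence of Lemmas \ref{Lemma1} and \ref{Lemma2}.
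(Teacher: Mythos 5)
Your proposal is correct and follows essentially the same route as the paper's own proof: interpret $\exp_p(nD)$ as the shift $s\mapsto s+n$ (formally, via the Taylor expansion), so that $T_p^a[g](s)=\sum_{n\ge 0}P_{p,n}^a(s)\,g(s+n)$ reproduces the left-hand side of (\ref{main1}), then invoke Lemma \ref{Lemma1} for the evaluation and Lemma \ref{Lemma2} for convergence. Your closing remark correctly identifies why the identity is stated as \emph{formal} --- the rigorous justification of the shift action is deferred (in the paper, to Proposition \ref{Taylorcon} and Corollary \ref{Cor}, under the additional hypothesis $a\in K$).
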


\begin{proof}
Denote by $D_s^k := \frac{\partial ^k}{\partial s^k}$. For any analytic function $f(s)$ on $\mathbb{Z}_{p}$ and $n\in\mathbb{N}$ we have
\begin{equation}\label{Shift}
\begin{aligned}
\exp_{p}(nD)f(s)&= \left(id + \sum_{k=1}^\infty \frac{n^k}{k!} D_s^k\right)f(s)\\
&=f(s)+ \sum_{k=1}^\infty \frac{n^k}{k!}\frac{\partial ^k f(s)}{\partial s^k}\\
&=f(s+n),
\end{aligned}
\end{equation}
which maybe interpreted operationally through its formal Taylor expansion in $n.$
By Proposition \ref{analytic} and Theorem \ref{analytic-zeta}, for $a\in\mathbb{C}_{p}$ with $|a|_{p}>1$,
the function $\zeta_{p,E}(s,a)-\langle a \rangle^{1-s}$
is analytic for $s\in\mathbb{Z}_{p}.$
Thus from (\ref{Shift}) we get
\begin{equation}
\begin{aligned}
L_{p,n}^{a}\left[\zeta_{p,E}(s,a)-\langle a \rangle^{1-s}\right]&= P_{p,n}^{a}(s) \exp_{p}(nD)\left[\zeta_{p,E}(s,a)-\langle a \rangle^{1-s}\right]\\
&=P_{p,n}^{a}(s)\left(\zeta_{p,E}(s+n,a)-\langle a \rangle^{1-(s+n)}\right) \\
\end{aligned}
\end{equation}
for $n\geq 0$ and by the definition of $T_{p}^{a}$ (\ref{Tp}) and Lemma \ref{Lemma1}
\begin{equation}\label{3.17}
\begin{aligned}
&\quad T_{p}^{a}\left[\zeta_{p,E}(s,a)-\langle a \rangle^{1-s}\right]\\
&=\sum_{n=0}^\infty L_{p,n}^{a}\left[\zeta_{p,E}(s,a)-\langle a \rangle^{1-s}\right]\\
&=\sum_{n=0}^\infty P_{p,n}^{a}(s)\left(\zeta_{p,E}(s+n,a)-\langle a \rangle^{1-(s+n)}\right) \\
&=\frac{2}{s-1}\left(\zeta_{p,E}(s,a)-\langle a \rangle^{1-s}\right)
+\frac{1}{\omega_{v}(a)}\left(\zeta_{p,E}(s+1,a)-\langle a \rangle^{1-(s+1)}\right)\\
&\quad+\sum_{n=2}^\infty \frac{\prod_{j = 1}^{n - 1}(s-1+j)}{n!\omega_{v}^{n}(a)}\left(\zeta_{p,E}(s+n,a)-\langle a \rangle^{1-(s+n)}\right)\\
&=\frac{1}{s-1}\left(\langle a-1 \rangle^{1-s}-\langle a\rangle^{1-s}\right),
\end{aligned}
\end{equation}
which is the desired result.
\end{proof}

In what follows, we shall investigate the area of convergence for Theorem \ref{main theorem} and show that the operator $T_{p}^{a}$ applied to the $p$-adic Hurwitz-type Euler zeta function $\zeta_{p,E}(s,a)$ is convergent in certain area of the $p$-adic plane.
First we need to prove the following proposition.

\begin{proposition}\label{Taylorcon}
 Let $K$ be a finite extension of
 $\mathbb{Q}_{p}$ with ramification index $e$ over $\mathbb{Q}_{p}$  less than $p-1.$ Let $s\in\mathbb{C}_{p}$ with $|s|_{p}<r_{p}:=p^{\frac{1}{e}-\frac{1}{p-1}},$ and $a\in K\backslash\mathbb{Z}_{p}.$ For any $n \geq 2$ the series
$$\exp_{p}(nD)\left[\zeta_{p,E}(s,a) -\langle a \rangle^{1-s}\right]
=\sum_{k = 0}^\infty \frac{D_s^k \left( \zeta_{p,E}(s,a) -\langle a \rangle^{1-s}\right)}{k!}n^k $$
converges.
\end{proposition}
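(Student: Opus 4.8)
The plan is to reduce the claim to the elementary non-archimedean principle that a series in $\mathbb{C}_{p}$ converges precisely when its general term tends to $0$, and to supply the estimate making this work from the analyticity of $\zeta_{p,E}(s,a)-\langle a\rangle^{1-s}$ guaranteed by Proposition~\ref{analytic} and Theorem~\ref{analytic-zeta}. Throughout write $g(s):=\zeta_{p,E}(s,a)-\langle a\rangle^{1-s}$. The key observation is that the coefficients $\frac{D_{s}^{k}g(s)}{k!}$ occurring in the series are nothing but the Taylor coefficients of $g$ at the point $s$, and once $g$ is known to be a single convergent power series on the disc $|s|_{p}<r_{p}$ these coefficients are controlled uniformly.

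First I would record the analyticity of $g$ on $|s|_{p}<r_{p}$. Since the ramification index of $K$ is $e$, its uniformizer satisfies $|\pi|_{p}=p^{-1/e}$, so the radius $|\pi|_{p}^{-1}p^{-1/(p-1)}$ appearing in Proposition~\ref{analytic} and Theorem~\ref{analytic-zeta} equals $p^{1/e-1/(p-1)}=r_{p}$. Thus $\zeta_{p,E}(s,a)$ is analytic on $|s|_{p}<r_{p}$; moreover $\langle a\rangle^{1-s}=\langle a\rangle\cdot\langle a\rangle^{-s}$ is the constant $\langle a\rangle$ times the reflection $s\mapsto\langle a\rangle^{-s}$ of the analytic function $s\mapsto\langle a\rangle^{s}$, hence is analytic on the same disc. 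Consequently $g(s)=\sum_{m=0}^{\infty}c_{m}s^{m}$ with $|c_{m}|_{p}R^{m}\to0$ for every $R<r_{p}$. I would also flag here the single essential use of the hypothesis $e<p-1$: it forces $\tfrac1e>\tfrac1{p-1}$, hence $r_{p}>1$, so that every positive integer $n$ (for which $|n|_{p}\le1$) lies strictly inside the disc of convergence.

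Next I would differentiate the power series term by term. From $D_{s}^{k}s^{m}=\frac{m!}{(m-k)!}s^{m-k}$ for $m\ge k$ one obtains
\begin{equation*}
\frac{D_{s}^{k}g(s)}{k!}=\sum_{m\ge k}\binom{m}{k}c_{m}\,s^{m-k},
\end{equation*}
and since $\binom{m}{k}\in\mathbb{Z}$ gives $\left|\binom{m}{k}\right|_{p}\le1$, this yields
\begin{equation*}
\left|\frac{D_{s}^{k}g(s)}{k!}\,n^{k}\right|_{p}\le\sup_{m\ge k}|c_{m}|_{p}\,|s|_{p}^{m-k}\,|n|_{p}^{k}.
\end{equation*}
Choosing $R$ with $\max(|s|_{p},|n|_{p})\le R<r_{p}$, which is possible because $|s|_{p}<r_{p}$ and $|n|_{p}\le1<r_{p}$, and using $|s|_{p}\le R$ and $|n|_{p}\le R$, the right-hand side is at most $\sup_{m\ge k}|c_{m}|_{p}R^{m}$. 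As $R<r_{p}$ we have $|c_{m}|_{p}R^{m}\to0$, so $\sup_{m\ge k}|c_{m}|_{p}R^{m}\to0$ as $k\to\infty$. Hence the general term of $\sum_{k\ge0}\frac{D_{s}^{k}g(s)}{k!}n^{k}$ tends to $0$ and the series converges in $\mathbb{C}_{p}$; its sum is then the Taylor re-expansion $g(s+n)=\zeta_{p,E}(s+n,a)-\langle a\rangle^{1-(s+n)}$, in agreement with (\ref{Shift}).

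The hard part is really the middle step, and it is conceptual rather than computational. A priori the factor $\frac1{k!}$ is dangerous $p$-adically, since $|k!|_{p}$ is small and dividing high-order derivatives by it could inflate the size of the coefficients. Analyticity is exactly what defuses this: it guarantees that $\frac{D_{s}^{k}g}{k!}$ is a genuine Taylor coefficient, expressible through the $c_{m}$ with \emph{integer} binomial weights, so the offending $k!$ disappears from the estimate. Beyond this, the only bookkeeping is confirming $r_{p}>1$ so that $n$ sits inside the disc; no finer estimate on $g$ is required.
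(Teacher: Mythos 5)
Your proof is correct, and its skeleton matches the paper's: both arguments identify $r_{p}=p^{1/e-1/(p-1)}$ as the radius of analyticity of $g(s)=\zeta_{p,E}(s,a)-\langle a\rangle^{1-s}$ via Proposition \ref{analytic} and Theorem \ref{analytic-zeta}, and both use the hypothesis $e<p-1$ only to conclude $r_{p}>1$, so that the integer increment $n$ (with $|n|_{p}\le 1$) lies strictly inside the disc of convergence. The difference is in how convergence of the Taylor series \emph{at a general point} $s$ of the disc is justified. The paper argues structurally: by the ultrametric inequality the disc $\{|s-s_{0}|_{p}<r_{p}\}$ coincides with $\{|s|_{p}<r_{p}\}$, so $g$ re-expands around $s_{0}$ as a power series with the same radius of convergence $r_{p}$, and $\exp_{p}(nD)g$ is just this re-expansion evaluated at $s_{0}+n$. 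You instead prove the required coefficient bound directly: differentiating the expansion at $0$ term by term, writing $D_{s}^{k}g/k!=\sum_{m\ge k}\binom{m}{k}c_{m}s^{m-k}$, and using $\left|\binom{m}{k}\right|_{p}\le 1$ to neutralize the $p$-adically dangerous factor $1/k!$, you bound the $k$-th term of the series by $\sup_{m\ge k}|c_{m}|_{p}R^{m}\to 0$ and invoke the non-archimedean criterion that vanishing general term implies convergence. In effect you give a self-contained proof of the re-centering property of $p$-adic power series that the paper cites implicitly as standard theory; your route buys rigor without citation and makes explicit why the $1/k!$ is harmless, while the paper's buys brevity. Both arguments are sound, and both conclude, consistently with (\ref{Shift}), that the sum of the series is $\zeta_{p,E}(s+n,a)-\langle a\rangle^{1-(s+n)}$.
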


\begin{remark}\label{remark3}
This is mainly because the non-archimedean property of the $p$-adic metric and it is quite different from the complex situation for Hurwitz zeta functions. In that case, by \cite[Proposition 5]{PK}, we have  ``for any $s \in \mathbb{C}$, we can find some $N \geq 0$ so that the series
$$\exp(ND)\left[\zeta(s,a) - \frac{1}{a^s}\right]=\sum_{k = 0}^\infty \frac{D_s^k \left( \zeta(s,a) - \frac{1}{a^s}\right)}{k!}N^k $$ diverges."
\end{remark}

\begin{proof}[Proof of Propsoition \ref{Taylorcon}.]
Let $(\pi)$ be the maximal ideal of
 the ring of integers $O_{K}$ of $K$. Then $$|\pi|_{p}=|p|_{p}^{\frac{1}{e}}=\left(\frac{1}{p}\right)^{\frac{1}{e}}.$$ By Proposition \ref{analytic} and Theorem \ref{analytic-zeta}, given $a\in K\backslash\mathbb{Z}_{p},$
the function $\zeta_{p,E}(s,a) -\langle a \rangle^{1-s}$ is  analytic  for $$|s|_{p}<r_{p}:= |\pi|_{p}^{-1}p^{-1/(p-1)}=p^{\frac{1}{e}-\frac{1}{p-1}}.$$
 Fix $s_{0}\in\mathbb{C}_{p}$ with $|s_{0}|_{p}<r_{p}.$ For any $s\in\mathbb{C}_{p}$ with $|s-s_{0}|_{p}<r_{p}$, we have
$$|s|_{p}\leq \max\{|s-s_{0}|_{p},|s_{0}|_{p}\}<r_{p},$$ so the disc $\{s: |s-s_{0}|< r_{p}\}$ is contained in the disc  $\{s:|s|< r_{p}\}.$ In fact, $$\{s: |s-s_{0}|< r_{p}\}=\{s:|s|< r_{p}\}.$$
Thus $\zeta_{p,E}(s,a)$  can be expanded as a power series around $s_{0}$ with the radius of convergence equal to $r_{p}.$

Since $e < p-1$ as the assumption,  we have $r_{p}>1$ and for any $n\in\mathbb{N},$ we have $|(s_{0}+n)-s_{0}|_{p}=|n|_{p}\leq 1<r_{p}.$
From the discussion above, we have  the following convergent power series expansion of $\zeta_{p,E}(s,a)$ at $s_{0}$
$$\zeta_{p,E}(s_{0}+n,a) -\langle a \rangle^{1-(s_{0}+n)}
=\sum_{k = 0}^\infty \frac{D_s^k\big|_{s=s_{0}}  \left( \zeta_{p,E}(s,a) -\langle a \rangle^{1-s}\right)}{k!}n^k.$$
Then by the definition of exp$_{p}(nD)$ (\ref{Tp}), we see that
\begin{equation}\begin{aligned}
\zeta_{p,E}(s_{0}+n,a) -\langle a \rangle^{1-(s_{0}+n)}
&=\sum_{k = 0}^\infty \frac{D_s^k\big|_{s=s_{0}} \left( \zeta_{p,E}(s,a) -\langle a \rangle^{1-s}\right)}{k!}n^k\\
&=\exp_{p}(nD)\big|_{s=s_{0}} \left[\zeta_{p,E}(s,a) -\langle a \rangle^{1-s}\right],
\end{aligned}
\end{equation}
which is the desired result.
\end{proof}

From the above proposition we have the following result  which asserts  that the operator $T_{p}^{a}$ applied to the $p$-adic Hurwitz-type Euler zeta function $\zeta_{p,E}(s,a)$ is convergent
in the $p$-adic topology.

\begin{corollary}\label{Cor} Let $K$ be stated as in the Proposition \ref{Taylorcon}. Then
\begin{equation}
 T_{p}^{a}\left[\zeta_{p,E}(s,a)-\langle a \rangle^{1-s}\right]
=\sum_{n=1}^\infty P_{p,n}^{a}(s)\exp_{p}(nD)\left[\zeta_{p,E}(s,a)-\langle a \rangle^{1-s}\right]
\end{equation}
converges for $s\in\mathbb{Z}_{p}$ with $s\neq 1$ and $a\in K$ with $|a|_{p}>1.$
\end{corollary}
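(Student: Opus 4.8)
The plan is to combine the two convergence results already in hand---Proposition~\ref{Taylorcon}, which controls the inner series $\exp_p(nD)[\,\cdot\,]$ in the derivatives $D_s^k$ for each fixed shift $n$, and Lemma~\ref{Lemma2}, which controls the outer series in $n$---and to notice that the ramification hypothesis $e<p-1$ is exactly what lets them be used together. Indeed $e<p-1$ forces $r_p=p^{1/e-1/(p-1)}>1$, and this single inequality is what places every integer shift $n$ strictly inside the radius of convergence of the displaced Taylor expansion of $\zeta_{p,E}(s,a)-\langle a\rangle^{1-s}$.

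First I would verify that the hypotheses of Proposition~\ref{Taylorcon} hold on the whole region in the statement. For $s\in\mathbb{Z}_p$ we have $|s|_p\le 1<r_p$, and for $a\in K$ with $|a|_p>1$ we have $a\in K\backslash\mathbb{Z}_p$, since every element of $\mathbb{Z}_p$ has absolute value at most $1$. Hence Proposition~\ref{Taylorcon} applies and gives, for each $n\ge 1$,
\begin{equation*}
\exp_p(nD)\left[\zeta_{p,E}(s,a)-\langle a\rangle^{1-s}\right]=\zeta_{p,E}(s+n,a)-\langle a\rangle^{1-(s+n)},
\end{equation*}
the inner derivative series being convergent. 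For $n=0$ the operator $\exp_p(0\cdot D)$ is the identity, so that term is a single value and needs no discussion; this is why the corollary indexes its sum from $n=1$. This is the step in which $r_p>1$ is indispensable, and it is precisely the failure of the corresponding inequality in the archimedean setting that produces the divergence recorded in Remark~\ref{remark3}.

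With the inner series evaluated, I would feed these closed forms into the definition $T_p^a=\sum_{n=0}^\infty P_{p,n}^a(s)\exp_p(nD)$, reducing the left-hand side to
\begin{equation*}
T_p^a\left[\zeta_{p,E}(s,a)-\langle a\rangle^{1-s}\right]=\sum_{n=0}^\infty P_{p,n}^a(s)\left(\zeta_{p,E}(s+n,a)-\langle a\rangle^{1-(s+n)}\right).
\end{equation*}
The $n=0$ and $n=1$ contributions are finite, while the convergence of the remaining tail $\sum_{n\ge 2}$ is exactly the assertion of Lemma~\ref{Lemma2} (applied with $a\in K\subset\mathbb{C}_p$). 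Thus the outer series converges $p$-adically, which is the claim; together with Theorem~\ref{main theorem} it moreover identifies the value as $\frac{1}{s-1}\left(\langle a-1\rangle^{1-s}-\langle a\rangle^{1-s}\right)$.

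The point carrying the real content---what I would single out as the main thing to get right---is the coexistence of the two convergence regimes and the legitimacy of collapsing the genuine double series (over $n$, then over $k$) into the single series over $n$. In the ultrametric setting this collapse is harmless once each inner series converges and the resulting outer series converges, but it is worth stating plainly that the inner convergence holds for \emph{every} $n$ because $|n|_p\le 1<r_p$ uniformly, whereas the outer convergence rests on the decay $|P_{p,n}^a(s)|_p\le p^{nv_p(a)}\,n\to 0$ from the estimate~(\ref{est1}) combined with the boundedness of $\zeta_{p,E}(s+n,a)-\langle a\rangle^{1-(s+n)}$. It is the negativity $v_p(a)<0$ (that is, $|a|_p>1$) that supplies the outer decay and the small ramification $e<p-1$ that supplies the inner convergence, so each hypothesis in the statement is doing precisely one job.
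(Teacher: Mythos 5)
Your proposal is correct and takes essentially the same approach as the paper: the paper likewise rewrites $T_p^a[\zeta_{p,E}(s,a)-\langle a\rangle^{1-s}]$ via the identity from Theorem~\ref{main theorem}, then combines Proposition~\ref{Taylorcon} (convergence of each inner series $\exp_p(nD)[\,\cdot\,]$, valid since $|n|_p\le 1<r_p$) with the coefficient decay $|P_{p,n}^a(s)|_p\le p^{nv_p(a)}\,n$ and Lemma~\ref{Lemma2} (convergence of the outer series). Your added remarks---checking that $a\in K$, $|a|_p>1$ places $a$ in $K\backslash\mathbb{Z}_p$, and making explicit why the double series collapses to a single series over $n$---only spell out details the paper leaves implicit.
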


\begin{remark}\label{remark4}
Notice that in the complex situation, we have \begin{equation*}T\left[ \zeta (s, a) - \frac{1}{a^s} \right]=\sum_{n=0}^{\infty} p_n(s)\exp(nD)\left[\zeta(s,a)- \frac{1}{a^s}\right]\end{equation*} diverges for all complex numbers $s\in \mathbb{C}$
(see \cite[Theorem 8]{PK}).
\end{remark}

\begin{proof}[Proof of Corollary \ref{Cor}.]
By (\ref{3.17}) we have
\begin{equation}\label{3.20}
\begin{aligned}
&\quad T_{p}^{a}\left[\zeta_{p,E}(s,a)-\langle a \rangle^{1-s}\right]\\
&=\sum_{n=0}^\infty P_{p,n}^{a}(s)\exp_{p}(nD)\left[\zeta_{p,E}(s,a)-\langle a \rangle^{1-s}\right]\\
&=\frac{2}{s-1}\left(\zeta_{p,E}(s,a)-\langle a \rangle^{1-s}\right)
+\frac{1}{\omega_{v}(a)}\left(\zeta_{p,E}(s+1,a)-\langle a \rangle^{1-(s+1)}\right)\\
&\quad+\sum_{n=2}^\infty \frac{\prod_{j = 1}^{n - 1}(s-1+j)}{n!\omega_{v}^{n}(a)}\left(\zeta_{p,E}(s+n,a)-\langle a \rangle^{1-(s+n)}\right).
\end{aligned}
\end{equation}
Suppose that $s\in\mathbb{Z}_{p}$ with $s\neq 1$ and $a\in K$ with $|a|_{p}>1.$ By (\ref{Tp2}) and (\ref{est1}), for $n\geq 2$ we have 
$$|P_{p,n}^{a}(s)|_{p}= \left|\frac{\prod_{j = 1}^{n - 1}(s-1+j)}{n!\omega_{v}^{n}(a)}
\right|_{p}\leq p^{nv_{p}(a)}\cdot n
 $$
 and $\lim_{n\to\infty} P_{p,n}^{a}(s)=0.$ Then combining the conclusions of Proposition \ref{Taylorcon} and Lemma \ref{Lemma2}, for each $n \geq 2,$ both the series
$$\exp_{p}(nD)\left[\zeta_{p,E}(s,a) -\langle a \rangle^{1-s}\right]=\sum_{k = 0}^\infty \frac{D_s^k \left( \zeta_{p,E}(s,a) -\langle a \rangle^{1-s}\right)}{k!}n^k $$ and the right hand side of (\ref{3.20})
converge, which have established our result.
\end{proof}

\section*{Acknowledgement} The authors are enormously grateful to the anonymous referee for his/her very careful
reading of this paper, and for his/her many valuable and detailed suggestions. We  also thank Professor Lawrence C. Washington for pointing out a gap in the proof of Lemma \ref{Lemma1} of the original manuscript and for his helpful suggestions.

Su Hu is supported by the Natural Science Foundation of Guangdong Province, China (No. 2020A1515010170).  Min-Soo Kim is supported by the National Research Foundation of Korea(NRF) grant funded by the Korea government(MSIT) (No. 2019R1F1A1062499). 

\bibliography{central}

\end{document}